\newtheorem{theorem}{Theorem}[section]
\newtheorem{lemma}[theorem]{Lemma}
\newtheorem{prop}[theorem]{Proposition}
\newtheorem{definition}[theorem]{Definition}
\newtheorem{remark}[theorem]{Remark}
\newcommand\EE{\mathbb{E}}
\newcommand\CC{\mathbb{C}}
\newcommand\cA{\mathcal{A}}
\newcommand\cM{\mathcal{M}}
\newcommand\tr{\text{tr}}
\newcommand\id{\text{id}}
\newcommand\ee{\varepsilon}
\title[Operator-valued free multiplicative convolution]{Operator-valued free multiplicative convolution: analytic subordination theory and applications to random matrix theory}
\author[S. Belinschi]{Serban T. Belinschi}
\address{Department of Mathematics \& Statistics,
Queen's University, and Institute of Mathematics ``Simion
Stoilow'' of the Romanian Academy;
Department of Mathematics and Statistics, 
Queen's University,
Jeffrey Hall, 
Kingston, ON K7L 3N6 CANADA} 
\email{sbelinsch@mast.queensu.ca}
\author[R. Speicher]{Roland Speicher}
\address{Universit\"{a}t des Saarlandes, FR $6.1-$Mathematik, Postfach 151150, 66041 Saarbr\"{u}cken, GERMANY } 
\email{speicher@math.uni-sb.de}
\author[J. Treilhard]{John Treilhard}
\address{Department of Mathematics \& Statistics,
Queen's University;
Department of Mathematics and Statistics, 
Queen's University,
Jeffrey Hall, 
Kingston, ON K7L 3N6 CANADA} 
\email{}
\author[C. Vargas]{Carlos Vargas}
\address{Universit\"{a}t des Saarlandes, FR $6.1-$Mathematik, Postfach 151150, 66041 Saarbr\"{u}cken, GERMANY} 
\email{carlos@math.uni-sb.de}
\date{\today}
\begin{document}

\thanks{Work of S. Belinschi was supported by a Discovery
Grant from NSERC. S. Belinschi also gratefully acknowledges the
support of the Alexander von Humboldt foundation and the 
hospitality of the Free Probability research group at the
Universit\"{a}t des Saarlandes during the work on this paper.\\ 
Work of J. Treilhard and C. Vargas was supported by funds from R. Speicher from the Alfried Krupp von Bohlen und Halbach Stiftung ("R\"uckkehr deutscher Wissenschaftler aus dem Ausland") and from the DFG (SP 419/8-1), respectively.}

\begin{abstract}

We give an explicit description, via analytic subordination, of free multiplicative convolution 
of operator-valued distributions. In particular, the subordination function is obtained from an iteration process. This algorithm is easily numerically implementable. We present two concrete applications of our method: the product of two free operator-valued semicircular elements and
the calculation of the distribution of $dcd+d^2cd^2$ for scalar-valued $c$ and $d$, which are free.  Comparision between the solution obtained by our
methods and simulations of random matrices shows excellent agreement.
\end{abstract}

\maketitle

\section{Introduction}

Free probability is a quite recent theory that has gained interest in 
the last few years. One of its main applications is on the field of random 
matrices. 
More specifically, it provides a conceptual way of understanding the 
distribution of the eigenvalues of several large random matrices. The 
variety 
of matrix models where free probability can be used is growing in 
accordance to the developments of the theory.

The crucial requirement that allows the treatment of a random matrix model 
with the algebraic and analytical machinery of free probability is that 
the matrices involved should satisfy (asymptotically, as their size tends
to infinity) freeness relations. Some of the most important 
random matrices, such as Wigner and Haar Unitary matrices, were shown, starting with the basic work \cite{V1991}, to 
have these freeness requirements among themselves and with respect to 
deterministic matrices.

The applicability of free probability increased rapidly, in 
different directions, with the implementation of 
Voiculescu's operator-valued version of the theory. The main idea is that 
operator-valued freeness is a much less restrictive condition, but still most of the features of usual free probability theory are present. We are now 
able to work with block Gaussian matrices \cite{Sh1996,ROBS}, 
rectangular random matrices of different sizes \cite{BG} and more complicated 
combinations of random and 
deterministic matrices \cite{SV}, where scalar-valued freeness breaks up.

Whereas the analytic theory of scalar-valued free convolutions is far evolved
(and thus we have now a variety of ways to deal with asymptotic eigenvalue distributions of matrices which are free), the same cannot be said about the
status of operator-valued convolutions. In particular, at the moment we do not have an analytic description of operator-valued convolutions which would be easily and controllably implementable on a computer. Thus, numerical investigations for the above mentioned random matrix models were done in a more or less ad hoc way. Our main aim is to improve on
this situation and provide a coherent analytic description of operator-valued
free convolution and show its usefulness for dealing with random matrix
questions. In the present paper we concentrate on multiplicative free convolution, the question of additive free convolution will be addressed in
another paper \cite{BMS}.

The present paper is motivated by the following problem (which was communicated to us by Aris Moustakas in this form in the context of wireless communications and, independently, by Raj Rao for the special case $b_1=b_2$ and $a_1=a_2^2$ in the context of random graphs):
If $\{a_1,a_2\}$ and $\{b_1,b_2\}$ are free, then it is not true in general that the elements $a_1b_1a_1$, $a_2b_2a_2$ are free. This has made the distribution of $c=a_1b_1a_1+a_2b_2a_2$ quite inaccessible up to now.

We observe, however, that the distribution of $c$ is the same (modulo a Dirac mass at zero) as the distribution of the element

\begin{equation} \label{rmprob} \begin{pmatrix} 
  a_1b_1a_1 + a_2b_2a_2 & 0  \\
  0 & 0  
 \end{pmatrix} = \begin{pmatrix}
  a_1 & a_2  \\
  0 & 0
 \end{pmatrix} \begin{pmatrix}
  b_1 & 0  \\
  0 & b_2
 \end{pmatrix}\begin{pmatrix}
  a_1 & 0  \\
  a_2 & 0
 \end{pmatrix},
\end{equation}
which in turn has the same moments as
\begin{equation} \begin{pmatrix}
  a_1^2 & a_1a_2  \\
  a_2a_1 & a_2^2  
 \end{pmatrix} \begin{pmatrix}
  b_1 & 0  \\
  0 & b_2
 \end{pmatrix}=:AB.
\end{equation}

The advantage of this reformulation is that the matrices $A$ and $B$ are free with amalgamation over the algebra 
${M}_2(\mathbb{C})$ of $2\times2$ constant matrices. Hence, the 
distribution that we are looking for will be given by first calculating the
${M}_2(\mathbb{C})$-valued free multiplicative convolution of $A$ and $B$ to obtain the $M_2(\mathbb{C})$-valued distribution of $AB$ and then
getting from this the (scalar-valued) distribution of $AB$ by taking the trace over $M_2(\mathbb{C})$. This has motivated us to look at the general problem of dealing with operator-valued free multiplicative convolutions.

The standard way to deal with free multiplicative convolutions is through 
Voiculescu's $S$-transform. The generalization of the $S$-transform to the 
operator-valued situation was found by Dykema \cite{Str}. (A certain version of the $S$-transform, via a Fock space-type construction, appeared already in the work of Voiculescu \cite{V1995}.) Direct computations of 
operator-valued $S$-transforms for non-trivial elements are, however, 
extremely hard. Approximations can be done on domains corresponding to 
domains of the Cauchy-Stieltjes transform which are located far away from 
the real axis. For practical purposes this is a problem, since we are 
interested in the behavior of the Cauchy transform close to the real axis, 
in order to recover the distribution by an Stieltjes inversion.

In this paper we follow Biane's approach \cite{Biane1} to scalar-valued 
free 
multiplicative convolution via analytic subordination, which was later 
extended by Voiculescu \cite{V2000,FreeMarkov} to the operator valued case. 

Our main contribution to the theory is to find the subordination 
functions as iterative 
limits, similar to what has been done in the scalar case \cite{BB07}. We 
rely on the twisted multiplicative property of Dykema's $S$-transform.

This will allow us to set up fixed point equations to approximate 
effectively the values of the Cauchy transform of $AB$ in the whole 
operator-valued 
upper half plane and in particular, close to the real axis. As inputs, we 
require the individual operator-valued Cauchy Transforms of $A$ and $B$ (or good approximations of these).

We want to stress that in the operator-valued context there are rarely situations where one has an analytic formula for the involved Cauchy transforms. The best one can hope for are equations which determine those transforms. In order to be applicable to concrete problems one needs of course a way to solve these equations - in particular, to single out the correct solution; our operator-valued equations usually have many solutions, only one of them corresponding to the wanted Cauchy transform. Since the characterization of the Cauchy transform among all solutions is by positivity requirements this is an intrinsic analytic characterization, which indicates that those equations cannot be solved by pure formal power series expansion arguments. So what we need for a theory of operator-valued convolution which is also practically applicable is to set up our theory in such a form which can also numerically be implemented (and such that the theory provides arguments for the working of this implementation). What has become more and more apparent in the scalar-valued context, namely that the subordination formulation of free convolution seems to be the right choice, is even more prominent in the operator-valued context. Trying to solve an operator-valued free multiplicative convolution problem directly with the help of the operator-valued $S$-transform becomes quite a challenging task very soon, whereas using the subordination formulation, as presented in 
this paper, allows not only a satisfying analytic description of the involved
transformations, but can also be implemented numerically very easily. 

In Section 2, we will present our analytic description, via subordination, of the free multiplicative operator-valued convolution. In Section 3, we will show
the usefulness of our approach by implementing our method to calculate the distribution of the product of two operator-valued free random variables and by comparing this with histograms for corresponding random models.
We consider there two different types
of examples: (i) the product of two operator-valued semicircular elements, the individual 
Cauchy transforms obtained by the fixed point equations described in 
\cite{HRS07}; (ii) in the context of our original problem described above, we treat the case $aba+a^2ba^2$, where $a$ is discrete and $b$ is either discrete or a semicircular variable. 

\section{Multiplication of operator-valued free random variables}

We will call an {\em operator-valued non-commutative probability space}
a triple $(\mathcal M,\mathbb E,B)$, where $\mathcal M$ is a von 
Neumann algebra, $B\subseteq\mathcal M$ is a $W^*$-subalgebra
containing the unit of $\mathcal M$, and $\mathbb E\colon\mathcal M
\to B$ is a unit-preserving conditional expectation. Elements
in $\mathcal M$ will be called operator-valued (or $B$-valued)
random variables. The distribution of a random variable
$x\in\mathcal M$ with respect to $\mathbb E$ is, by definition,
the set of multilinear maps
$$
\mu_x:=\{m_n^x\colon B^{n-1}\to B\colon m_n(b_1,\dots,b_{n-1})=
\mathbb E[xb_1xb_2\cdots xb_{n-1}x], n\in\mathbb N\}.
$$
We call $m_n^x$ the $n^{\rm th}$ moment of $x$ (or, equivalently,
of $\mu_x$).
It will be convenient to interprete $m_0^x$ as the constant
equal to $1$, the unit of $B$ (or, equivalently, of $\mathcal M$)
and $m_1^x=\mathbb E[x]$, the expectation of $x$.
We denote by $B\langle x_1\dots,x_n\rangle$ the ${}^*$-algebra
generated by $B$ and the elements $x_1,\dots,x_n$ in $\mathcal M$.

\begin{definition}\label{B-freeness}
Two algebras $A_1,A_2\subseteq\mathcal M$ containing $B$ are called 
{\em free with amalgamation over} $B$ with respect to $\mathbb E$
(or just {\em free over} $B$) if
$$
\mathbb E[x_1x_2\cdots x_n]=0
$$
whenever $n\in\mathbb N$, $x_j\in A_{i_j}$ satisfy $\mathbb E[x_j]=0$
and $i_j\neq i_{j+1}$, $1\leq j\leq n-1$. Two random variables $x,y\in
\mathcal M$ are called free over $B$ if $B\langle x\rangle$ and
$B\langle y\rangle$ are free over $B$.
\end{definition}
If $x,y\in\mathcal M$ are free over $B$, then $\mu_{x+y}$ and
$\mu_{xy}$ depend only on $\mu_x$ and $\mu_y$. Following 
Voiculescu, we shall denote these
dependencies by $\mu_x\boxplus\mu_y$ and $\mu_x\boxtimes\mu_y$, 
and call them the free additive, respectively free multiplicative,
convolution of the distributions $\mu_x$ and $\mu_y$.
It is known \cite{RS2,V1995} that both $\boxplus$ and $\boxtimes$
are associative, but $\boxtimes$ may fail to be commutative.

\subsection{Analytic transforms}

A very powerful tool for the study of operator-valued 
distributions is the generalized Cauchy-Stieltjes transform and
its fully matricial extension \cite{V1995,V2000}: for a fixed
$x\in\mathcal M$, we define $G_x(b)=\mathbb E\left[(b-x)^{-1}\right]$
for all $b\in B$ so that $b-x$ is invertible in $\mathcal M$. One can 
easily verify that $G_x$ is a holomorphic mapping on an open subset of 
$B$. Its {\em fully matricial extension} $G_x^{(n)}$ is defined on
the set of elements $b\in M_n(B)$ for which $b-x\otimes 1_n$ is invertible in $M_n(\mathcal M)$, by the relation $G_x^{(n)}(b)=
\mathbb E\otimes\text{Id}_{M_n(\mathbb C)}\left[(b-x\otimes 1_n)^{-1}
\right]$. It is a crucial observation of Voiculescu that 
the family $\{G_x^{(n)}\}_{n\ge1}$ encodes the distribution
$\mu_x$ of $x$. A succint description of how to identify
the $n^{\rm th}$ moment of $x$ when $\{G_x^{(n)}\}_{n\ge1}$ is
known is given in \cite{BPV2010}.

In the following we will use the notation $x>0$ for the situation where
$x\geq 0$ and $x$ is invertible; note that this is equivalent to the fact
that there exists a real $\ee\geq 0$ such that $x\geq \ee 1$. From the later
it is clear that $x>0$ implies $\EE[x]>0$ (because our conditional expectations are automatically completely positive).

From now on we shall restrict our attention to the case when
$x,y$ are selfadjoint, and (for most applications) nonnegative. 
In this case, one of appropriate domains for $G_x$ - and the 
domain we will use most - is the {\em operator upper half-plane}
$\mathbb H^+(B):=\{b\in B\colon\Im b>0\}.$ Elements in this open set
are all invertible, and $\mathbb H^+(B)$ is invariant under conjugation
by invertible elements in $B$. It has been noted
in \cite{V2000} that $G_x^{(n)}$ maps $\mathbb H^+(M_n(B))$ into
the operatorial lower half-plane $\mathbb H^-(M_n(B)):=-
\mathbb H^+(M_n(B))$ and has ``good behaviour at infinity'' in the
sense that $\displaystyle\lim_{\|b^{-1}\|\to0}bG_x^{(n)}(b)=
\lim_{\|b^{-1}\|\to0}G_x^{(n)}(b)b=1$. 

As, from an analytic function perspective, $G_x^{(n)}$ have 
essentially the same behaviour on $\mathbb H^+(M_n(B))$ for any
$n\in\mathbb N$, we shall restrict our analysis from now on to 
$G_x:=G_x^{(1)}$. However, all properties we deduce for this
$G_x$, and all the related functions we shall introduce, remain
true, under the appropriate formulation, for all $n\ge1$.

We shall use the following analytic mappings, all defined on
$\mathbb H^+(B)$; all transforms have a natural
Schwarz-type analytic extension to the lower half-plane given by
$f(b^*)=f(b)^*$;
in all formulas below, $x=x^*$ is fixed in $
\mathcal M$:
\begin{itemize}
\item the moment generating function:
\begin{equation}\label{psi}
\Psi_x(b)=\mathbb E\left[(1-bx)^{-1}-1\right]=\mathbb E\left[
(b^{-1}-x)^{-1}\right]b^{-1}-1=G_x(b^{-1})b^{-1}-1;
\end{equation}
\item The reciprocal Cauchy transform:
\begin{equation}\label{F}
F_x(b)=\mathbb E\left[(b-x)^{-1}\right]^{-1}=G_x(b)^{-1};
\end{equation}
\item The eta transform (Boolean cumulant series):
\begin{equation}\label{eta}
\eta_x(b)=\Psi_x(b)\left(1+\Psi_x(b)\right)^{-1}=1-b
\mathbb E\left[(b^{-1}-x)^{-1}\right]^{-1}=1-bF_x(b^{-1});
\end{equation}
\item In this paper we shall call this function ``the h transform:''
\begin{equation}\label{h}
h_x(b)=b^{-1}\eta_x(b)=b^{-1}-\mathbb E\left[(b^{-1}-x)^{-1}\right]^{-1}=b^{-1}-F_x(b^{-1});
\end{equation}
\end{itemize}
Based on the moment generating function, Dykema \cite{Str}
introduced the operator-valued version of Voiculescu's $S$-transform
\cite{V2} as an analytic mapping on Banach algebras (an earlier,
less easily employed version can be found in \cite{V1995}).
It is easy to note that $\Psi_x'(b)(c)=\mathbb E\left[
(1-bx)^{-1}cx(1-bx)^{-1}\right]$, so that 
$\Psi_x'(0)(c)=\mathbb E[cx]=c\mathbb E[x]$. Under the assumption
that $\mathbb E[x]$ is invertible in $B$ (and, in particular, when $x>0
$), the linear map
$\Psi'_x(0)$ becomes invertible, with inverse $c\mapsto c\mathbb E
[x]^{-1}$, and so by the usual Banach-space inverse function theorem,
$\Psi_x$ has an inverse around zero, which we shall denote by
$\Psi_x^{-1}$. The $S$-transform is defined as
\begin{equation}\label{S}
S_x(b)=b^{-1}(1+b)\Psi_x^{-1}(b),\quad \|b\|\text{ small enough.}
\end{equation}
Dykema showed \cite[Theorem 1.1]{Str} that, whenever $\mathbb E[x]$
and $\mathbb E[y]$ are both invertible in $B$, 
\begin{equation}\label{Str-property}
S_{xy}(b)=S_y(b)S_x(S_y(b)^{-1}bS_y(b)),\quad\|b\|\text{ small enough.}
\end{equation}

\subsection{Three ways to the subordination function}

Here we shall describe three ways to finding the analytic 
subordination functions for free multiplicative convolution of 
operator-valued distributions. The analytic subordination has
been proved in different contexts by Voiculescu and Biane
\cite{V3,Biane1,V2000,FreeMarkov}. For the case $B=\mathbb C$,
Biane showed that there exist analytic functions 
$\omega_1,\omega_2\colon\mathbb C\setminus[0,+\infty)\to
\mathbb C\setminus[0,+\infty)$ which preserve half-planes and
satisfy $\mathbb E_{\mathbb C\langle y\rangle}\left[(1-zxy)^{-1}
\right]=(1-\omega_2(z)y)^{-1},$ $\mathbb E_{\mathbb C\langle x\rangle}
\left[(1-zxy)^{-1}\right]=(1-\omega_1(z)x)^{-1}$. In particular,
$\eta_x(\omega_1(z))=\eta_y(\omega_2(z))=\eta_{xy}(z),$
$z\in \mathbb C\setminus[0,+\infty)$. Voiculescu extended 
this relation, essentially in \cite{V2000}, and made it more
precise in \cite{FreeMarkov}, to the case of a general $B$,
under the assumption that $\mathcal M$ is endowed with a tracial 
state and $\mathbb E$ preserves this trace.
Later, in \cite{BB07}, Biane's subordination functions $\omega_1,
\omega_2$ were found as limits of an iteration process involving
$\eta_x$ and $\eta_y$. A precise description of such an 
iterative process for (very general) positive operator-valued 
random variables, in the spirit of \cite{HRS07}, will be our
main contribution in this section.

Inspired by the shape of formula \eqref{Str-property}, we claim that
for our purposes, the most appropriate writing of the operator-valued
subordination phenomenon is the following:
\begin{theorem}\label{Main-op}
Let $x>0,y=y^*\in\mathcal M$ be two random 
variables with invertible expectations, free over $B$. 
There exists a G\^{a}teaux holomorphic map $
\omega_2\colon\{b\in B\colon\Im(bx)>0\}\to\mathbb H^+(B),$
such that
\begin{enumerate}
\item $\eta_y(\omega_2(b))=\eta_{xy}(b)$, $\Im(bx)>0$;
\item $\omega_2(b)$ and $b^{-1}\omega_2(b)$ are analytic around zero;
\item For any $b\in B$ so that $\Im(bx)>0$, the map $g_b\colon
\mathbb H^+(B)\to\mathbb H^+(B)$, $g_b(w)=bh_x(h_y(w)b)$ is well-defined, analytic and 
$$
\omega_2(b)=\lim_{n\to\infty}g_b^{\circ n}(w),
$$
for any fixed $w\in\mathbb H^+(B)$.
\end{enumerate}
Moreover, if one defines $\omega_1(b):=h_y(\omega_2(b))b$, then
$$
\eta_{xy}(b)=\omega_2(b)\eta_x(\omega_1(b))\omega_2^{-1}(b),
\quad\Im (bx)>0.
$$
\end{theorem}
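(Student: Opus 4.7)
The plan is threefold: derive the iteration map $g_b$ from Dykema's twisted multiplicative identity \eqref{Str-property}; prove $g_b$ is a strict G\^{a}teaux-holomorphic self-map of $\mathbb{H}^+(B)$ and invoke an Earle--Hamilton-type fixed-point theorem (in the spirit of the scalar argument \cite{BB07} and the operator-valued fixed-point machinery of \cite{HRS07}) to obtain $\omega_2(b)$ and convergence of the iterates; finally unpack the fixed-point equation to obtain the subordination (1) and the twisted conjugation identity.

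For the motivation, I would assume a putative $\omega_2$ satisfying $\eta_y(\omega_2) = \eta_{xy}(b)$ and invert: write $b = \Psi_{xy}^{-1}(\Psi_y(\omega_2))$ using $\Psi^{-1}(w) = w(1+w)^{-1}S(w)$ and substitute \eqref{Str-property}. After cancellation, the identity $h_x(v) = v^{-1}\eta_x(v)$ rearranges this into $\omega_2 = b\,h_x(h_y(\omega_2)b) = g_b(\omega_2)$, so finding $\omega_2$ amounts to solving this fixed-point problem. Next, for the iteration in (3), I would verify that $g_b$ is a G\^{a}teaux-holomorphic self-map of $\mathbb{H}^+(B)$ that is \emph{strict} on hyperbolically bounded subsets; the relevant half-plane calculus combines operator-valued Nevanlinna-type bounds on $F_y$, the hypothesis $\Im(bx)>0$, and the positivity $x>0$ to produce uniform two-sided estimates on the imaginary parts at each stage of the composition $w \mapsto h_y(w) \mapsto h_y(w)b \mapsto h_x(h_y(w)b) \mapsto b\,h_x(h_y(w)b)$. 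Strictness is exactly the hypothesis of Earle--Hamilton, so $g_b$ has a unique fixed point $\omega_2(b)\in\mathbb{H}^+(B)$ with iterates converging from any starting point, and G\^{a}teaux-holomorphy of $b\mapsto\omega_2(b)$ follows from the operator-valued implicit function theorem (the differential $\mathrm{id}-g_b'(\omega_2(b))$ is invertible by strictness).

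With the fixed point in hand, rewriting $\omega_2 = g_b(\omega_2)$ using $h_x(v) = v^{-1}\eta_x(v)$ gives $h_y(\omega_2)\omega_2 = \eta_x(h_y(\omega_2)b)$, i.e.\ $\omega_2^{-1}\eta_y(\omega_2)\omega_2 = \eta_x(\omega_1)$ with $\omega_1 := h_y(\omega_2)b$, which is already the twisted identity provided one knows claim (1). To establish (1), I would substitute $\omega_2\eta_x(\omega_1)\omega_2^{-1}$ in place of a candidate for $\eta_{xy}(b)$ and verify via \eqref{Str-property} that the induced candidate for $\Psi_{xy}^{-1}$ actually satisfies the $S$-transform relation characterising it; uniqueness of $\Psi_{xy}^{-1}$ near $0$ then yields (1). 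Analyticity around $0$ of $\omega_2$ in (2) follows because $g_0\equiv 0$ forces $\omega_2(0)=0$; applying the implicit function theorem at $(b,w)=(0,0)$ gives an analytic germ whose first-order term is linear in $b$, so both $\omega_2(b)$ and $b^{-1}\omega_2(b)$ are analytic at $0$.

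The main obstacle will be the operator-valued strictness estimate for $g_b$: in place of the elementary scalar inequalities, one must show that imaginary parts stay uniformly bounded below by positive elements of $B$ throughout the chain of operations defining $g_b$, which is exactly where both $x>0$ and $\Im(bx)>0$ are used essentially. A secondary subtlety is ensuring that the fixed point thus produced is the correct branch of $\omega_2$, not a spurious solution; this is pinned down by uniqueness of the analytic germ at $0$ via (2).
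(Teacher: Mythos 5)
Your derivation of the fixed-point equation and the isolation of the key strictness estimate mirror the paper closely: the paper manipulates Dykema's identity via a variant $\Sigma_x(b)=b^{-1}\eta_x^{-1}(b)$ to arrive at $\omega_2(b)=bh_x(h_y(\omega_2(b))b)$, and then proves a lemma (its Lemma \ref{domain}) showing $\Im h_{bx}(c)\ge\mathbb E[(\Im(bx))^{-1}]^{-1}>0$ for $\Im c\ge 0$ by reducing to scalar Pick functions $f_\varphi$ and applying the Nevanlinna representation plus Cauchy--Schwarz --- this is exactly the half-plane calculus you gesture at.

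However, there is a genuine gap in the passage from strictness to the conclusion of part (3). You write ``Strictness is exactly the hypothesis of Earle--Hamilton, so $g_b$ has a unique fixed point $\omega_2(b)\in\mathbb{H}^+(B)$ with iterates converging from any starting point.'' The Earle--Hamilton theorem requires the domain to be \emph{bounded} and the image to lie strictly inside it (positive distance to the boundary). The operator upper half-plane $\mathbb{H}^+(B)$ is unbounded, and the bound $\Im g_b(w)\ge\mathbb E[(\Im(bx))^{-1}]^{-1}$ keeps the image away from the real boundary but does \emph{not} bound $\|g_b(w)\|$; you cannot conjugate to a bounded model and land strictly inside it. (Already in the scalar case, $z\mapsto z+i$ is a self-map of $\mathbb{C}^+$ with $\Im(z+i)\ge 1$ everywhere, yet has no fixed point.) So Earle--Hamilton does \emph{not} apply directly for arbitrary $b$ with $\Im(bx)>0$. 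The paper invokes Earle--Hamilton only for $b$ of small norm, where explicit power-series estimates on $h_x,h_y$ show $g_b$ maps a ball $\{\|w\|<\varepsilon\}$ strictly into $\{\|w\|<\varepsilon/2\}$. For general $b$ with $\Im(bx)>0$, the paper performs a substantially different continuation argument: it picks a small auxiliary $b_1$ with $\Im(b_1x)>0$, considers the path $t\mapsto tb+(1-t)b_1$, and for each positive linear functional $\varphi$ shows the scalar family $\{t\mapsto\varphi(g_{tb+(1-t)b_1}^{\circ n}(w))\}_n$ is normal (values in $\overline{\mathbb C^+}$) on a fixed complex neighbourhood $V\supset[0,1]$; since the pointwise limit exists for small $|t|$, Vitali's theorem gives convergence for all $t\in V$, and then the uniform boundedness principle in $B$ (a von Neumann algebra) upgrades weak convergence to the existence of the limit $\omega_2(b)=\lim_n g_b^{\circ n}(w)$. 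This normal-families/Vitali/uniform-boundedness step is the heart of the global statement in part (3), and your proposal does not supply a substitute for it. Your appeal to the implicit function theorem for analyticity of $b\mapsto\omega_2(b)$ likewise presupposes invertibility of $\mathrm{id}-g_b'(\omega_2(b))$ globally, which again relies on the strict-contraction property that is not available beyond the small-ball regime.

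A secondary point: your route to claim (1) (``substitute $\omega_2\eta_x(\omega_1)\omega_2^{-1}$ as a candidate for $\eta_{xy}(b)$ and verify via the $S$-transform relation'') is backwards relative to the paper and risks circularity; the paper gets (1) for free near zero from the definition $\omega_2=\eta_y^{-1}\circ\eta_{xy}$, and then extends (1) to $\{\Im(bx)>0\}$ by the same scalar identity-principle/convexity trick once the global iteration limit is established.
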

Following the proof of this theorem, we shall mention several
conditions under which some hypotheses, particularly the - rather
inconvenient in practical applications - invertibility requirement, 
can be weakened or entirely dropped.

\begin{proof}
We shall split our proof in several remarks, formulas and lemmas.
For our purposes, a slight variation of the $S$-transform will be
more useful: we define the sigma transform $\Sigma_x(b)=b^{-1}
\eta_x^{-1}(b)$, again on a neighbourhood of zero. Elementary 
arithmetic manipulations
show that $\Sigma_x(b)=S_x(b(1-b)^{-1})$ and so $\Sigma_{xy}(b)=
\Sigma_y(b)\Sigma_x(\Sigma_y(b)^{-1}b\Sigma_y(b))$.
Using this relation we write on a neighbourhood of zero
in $B$:
\begin{eqnarray*}
b\eta_{xy}^{-1}(b)& = & b^2\Sigma_{xy}(b)\\
& = & b^2\Sigma_y(b)\Sigma_x(\Sigma_y(b)^{-1}b\Sigma_y(b))\\
& = & b^2\Sigma_y(b)(\Sigma_y(b)^{-1}b\Sigma_y(b))^{-1}\eta_x^{-1}(\Sigma_y(b)^{-1}b\Sigma_y(b))\\
& = & b\Sigma_y(b)\eta_x^{-1}(\Sigma_y(b)^{-1}b\Sigma_y(b))\\
& = & \eta_{y}^{-1}(b)\eta_x^{-1}\bigl((\eta_{y}^{-1}(b))^{-1}b
\eta_{y}^{-1}(b)\bigr).
\end{eqnarray*}
Now define $\omega_2(b)=\eta_y^{-1}(\eta_{xy}(b))$, again for $\|b\|$
sufficiently small.
We substitute in the previous relation
$\eta_{xy}(b)$ for $b$ to obtain
\begin{eqnarray*}
\eta_y(\omega_2(b))b & = & \eta_{xy}(b)b\\
&=&\eta_{y}^{-1}(\eta_{xy}(b))\eta_x^{-1}((\eta_{y}^{-1}(\eta_{xy}(b)))^{-1}\eta_{xy}(b)
\eta_{y}^{-1}(\eta_{xy}(b)))\\
&=&\omega_2(b)\eta_{x}^{-1}(\omega_2(b)^{-1}\eta_y(\omega_2(b))
\omega_2(b)).
\end{eqnarray*}
Recalling from equation \eqref{h} the definition of the h 
transform, we obtain
$$
h_y(\omega_2(b))b=\eta_x^{-1}(h_y(\omega_2(b))\omega_2(b)), 
\quad\|b\|\text{ small enough}.
$$
An application of $\eta_x$ on both sides gives $\eta_x(h_y(\omega_2(b))
b)=h_y(\omega_2(b))\omega_2(b)$, or 
\begin{equation}\label{7}
\omega_2(b)=bh_x(h_y(\omega_2(b))b)=g_b(\omega_2(b)),
\quad\|b\|\text{ small enough}.
\end{equation}
This shows us that $\omega_2$ exists on a small enough
neighbourhood of the origin, and is a fixed point for the
map $g_b$ introduced in Theorem \ref{Main-op}. Moreover,
this indicates that $b^{-1}\omega_2(b)$ is
analytic around zero (a fact that follows quite easily
also from the definition of $\omega_2$ as $\eta_y^{-1}\circ
\eta_{xy}$). The most significant, however, is the fact that,
under the conditions of analyticity of the two maps
$\omega_2(b)$ and $b^{-1}\omega_2(b)$ around zero, equation
\eqref{7} uniquely determines $\omega_2$, and thus a function
satisfying \eqref{7} must also satisfy $\eta_y\circ\omega_2=
\eta_{xy}$, and vice-versa.

Now we observe that, under the additional assumption of the existence
of a trace $\tau$ on $\mathcal M$ so that $\tau=\tau\circ\mathbb E$,
this function coincides with a function provided by Voiculescu: in 
\cite{V2000}, Voiculescu proved that, whenever $x$ and $y$ are free
over $B$, the range of the analytic map $b\mapsto y+\mathbb E_{B\langle 
y\rangle}\left[(b-x-y)^{-1}\right]^{-1}$ is included in $B$.
($\mathbb  E_{B\langle y\rangle}$ denotes the conditional expectation with $\tau$ onto the von Neumann algebra generated by $B$ and $y$.)
It follows quite easily that the range of the map
$b\mapsto y^{-1}-\mathbb E_{B\langle 
y\rangle}\left[(y^{-1}-bx)^{-1}\right]^{-1}$ is also included in $B$.
We claim that 
$$
\omega_2(b)=y^{-1}-\mathbb E_{B\langle 
y\rangle}\left[(y^{-1}-bx)^{-1}\right]^{-1}.
$$
Since $\omega_2$ up to this moment has only been defined on
a neighbourhood of zero, we only need to verify the
equality for $\|b\|$ small. Indeed, the above relation
is equivalent to $\mathbb E_{B\langle 
y\rangle}\left[(1-bxy)^{-1}\right]^{-1}=1-\omega_2(b)y$, by the 
bimodule property of $\mathbb E_{B\langle 
y\rangle}$. Inverting both sides of the equality and applying
the conditional expectation $\mathbb E$ gives 
$\mathbb E\left[(1-bxy)^{-1}\right]=\mathbb E\left[(1-\omega_2(b)y)^{-1}\right].$ We invert, take a $b$ as a factor and subtract 1
in order to get 
$\eta_y(\omega_2(b))=\eta_{xy}(b)$. As it was noted by Voiculescu
\cite{V2000}, and we shall argue below, $y^{-1}-bx$ is invertible 
whenever 
$\Im (bx)>0$. Since the set $\{b\in B\colon\Im(bx)>0\}\cap
\{b\in B\colon\|b\|<\varepsilon\}$ is open for any $\varepsilon
>0$, and $\displaystyle\lim_{\|b\|\to0}\left(y^{-1}-\mathbb E_{B\langle 
y\rangle}\left[(y^{-1}-bx)^{-1}\right]^{-1}\right)=0$, it follows,
by analytic continuation, that there exists an $\varepsilon>0$
so that 
\begin{eqnarray}
\lefteqn{\omega_2\colon\{b\in B\colon\Im(bx)>0\}\cup
\{b\in B\colon\|b\|<\varepsilon\}  \to  B}\nonumber\\ 
& & \omega_2(b)  = y^{-1}-\mathbb E_{B\langle 
y\rangle}\left[(y^{-1}-bx)^{-1}\right]^{-1}.
\end{eqnarray}

Until now we have argued that an analytic map $\omega_2$
satisfying parts (1) and (2) of our theorem exists and is unique
(i) on a neighbourhood of zero for any $B$-valued non-commutative
probability space $(\mathcal M,\mathbb E,B)$, and (ii) that this
$\omega_2$ extends analytically to $\{b\in B\colon\Im(bx)>0\}$
when $\mathcal M$ is endowed with a tracial state $\tau$ so that
$\tau\circ\mathbb E$ remains a trace.
We go now to the third way of identifying $\omega_2$,
namely as a fixed point of an analytic mapping, method which
will allow us to extend $\omega_2$ to a set of the form
$\{b\in B\colon\Im(bx)>0\}\cup
\{b\in B\colon\|b\|<\varepsilon\}$ for any type of 
non-commutative
probability space $(\mathcal M,\mathbb E,B)$.
\begin{lemma}\label{domain}
Assume that $x\ge0$ is invertible in $\mathcal M$ and
$b\in\{b\in B\colon\Im(bx)>0\}$. If $c\in
B$ is so that $\Im c\ge0$, then $\Im h_{bx}(c)\ge\mathbb E\left[(\Im(bx))^{-1}\right]^{-1}>0$.
\end{lemma}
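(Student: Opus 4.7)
The strategy is to write $h_{bx}(c)$ in a form where $\Im h_{bx}(c)$ splits into a $B$-valued piece that cancels exactly and an $\mathcal M$-valued piece that is bounded below via Cauchy--Schwarz for the conditional expectation $\mathbb E$. I treat first the case $\Im c>0$ (so $c$ is invertible and $-\Im c^{-1}=c^{-1}(\Im c)(c^*)^{-1}=:\epsilon^2\ge 0$ with $\epsilon\in B$ self-adjoint); the case $\Im c\ge 0$ follows from this by perturbation, replacing $c$ by $c+i\delta$ and letting $\delta\downarrow 0$.

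Set $W:=bx-c^{-1}\in\mathcal M$. The hypotheses $x>0$ and $\Im(bx)>0$ give $\Im W=\Im(bx)+\epsilon^2>0$, so $W$ is invertible in $\mathcal M$. Since $(c^{-1}-bx)^{-1}=-W^{-1}$, the definition \eqref{h} reads $h_{bx}(c)=c^{-1}+E^{-1}$, with $E:=\mathbb E[W^{-1}]\in\mathbb H^-(B)$. The identity $\Im A^{-1}=-A^{-1}(\Im A)A^{-*}$, applied both inside $\mathbb E$ and on $E$, then yields
\[
\Im h_{bx}(c)=-\epsilon^2+E^{-1}\mathbb E\left[W^{-1}\bigl(\epsilon^2+\Im(bx)\bigr)W^{-*}\right]E^{-*}.
\]
For the $\epsilon^2$-contribution, since $\epsilon\in B$ the Kadison--Schwarz inequality $\mathbb E[T^*T]\ge\mathbb E[T]^*\mathbb E[T]$ with $T:=\epsilon W^{-*}$ gives $\mathbb E[W^{-1}\epsilon^2 W^{-*}]\ge E\epsilon^2 E^*$; conjugating by $E^{-1}$ and $E^{-*}$ produces a contribution $\ge\epsilon^2$, exactly cancelling the $-\epsilon^2$.

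The heart of the argument is the $m:=\Im(bx)$-contribution. Here $m\in\mathcal M$ is typically not in $B$, so the naive Cauchy--Schwarz (pulling $m$ out as a $B$-factor) is unavailable. I invoke instead the generalized operator Cauchy--Schwarz inequality for any $2$-positive unital map $\Phi$,
\[
\Phi(Y^*X)^*\Phi(Y^*Y)^{-1}\Phi(Y^*X)\le\Phi(X^*X),
\]
applied with $X:=m^{1/2}W^{-*}$ and $Y:=m^{-1/2}$ (legal because $m>0$ is invertible in $\mathcal M$). Then $X^*X=W^{-1}mW^{-*}$, $Y^*Y=m^{-1}$, $Y^*X=W^{-*}$, so $\mathbb E[Y^*X]=E^*$, and the inequality becomes $E\,\mathbb E[m^{-1}]^{-1}E^*\le\mathbb E[W^{-1}mW^{-*}]$. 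Conjugation by $E^{-1}$ and $E^{-*}$ yields the lower bound $\mathbb E[(\Im(bx))^{-1}]^{-1}$; combining with the previous step completes the proof, and this bound is strictly positive in $B$ since $\Im(bx)>0$. The main obstacle is the choice of $X,Y$: one needs $X^*X$ to retain the full $W^{-1}mW^{-*}$, while $Y^*Y$ must be invertible enough that $\mathbb E[Y^*Y]^{-1}$ makes sense and $\mathbb E[Y^*X]$ factors through $E^*$. Splitting $m=m^{1/2}m^{1/2}$ and taking $Y=m^{-1/2}$ achieves exactly this, and it is precisely this split that produces the Jensen-type bound $\mathbb E[m^{-1}]^{-1}$ (rather than the stronger but here inaccessible $\mathbb E[m]$) in the conclusion.
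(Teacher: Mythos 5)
Your proof is correct, and it takes a genuinely different route from the paper's. The paper works indirectly: it applies an arbitrary state $\varphi$ to reduce matters to scalar self-maps $f_\varphi$ of the complex upper half-plane, and then extracts the lower bound from the Nevanlinna representation (computing $\lim_{z\to\infty}f_\varphi(z)/z$ and evaluating at $z=i$), followed by a separate argument to absorb the $\Im c$ contribution. You instead stay entirely at the operator level: the identity $\Im A^{-1}=-A^{-1}(\Im A)A^{-*}$ turns $\Im h_{bx}(c)$ into $-\epsilon^2 + E^{-1}\mathbb E[W^{-1}(\epsilon^2+\Im(bx))W^{-*}]E^{-*}$, after which Kadison--Schwarz kills the $-\epsilon^2$ and the $2$-positivity Schur-complement inequality with the factorization $X=m^{1/2}W^{-*}$, $Y=m^{-1/2}$ produces precisely $\mathbb E[m^{-1}]^{-1}$ as the lower bound. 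The choice of $Y=m^{-1/2}$ is the essential insight -- it is exactly what makes $\mathbb E[Y^*X]=E^*$ while leaving $\mathbb E[Y^*Y]=\mathbb E[m^{-1}]$ -- and it replaces the paper's Nevanlinna-asymptotics step with a single algebraic inequality. Your approach is shorter, avoids the detour through scalar Pick functions, and makes it transparent why the bound is $\mathbb E[(\Im bx)^{-1}]^{-1}$ rather than something stronger. The only soft spot is the closing perturbation $c\mapsto c+i\delta$: for noninvertible $c$ with $\Im c=0$ (e.g.\ $c=0$) the formula $h_{bx}(c)=c^{-1}+E^{-1}$ does not literally make sense, so you should invoke the analytic extension of $h_{bx}$ through $0$ (established in the paper's power-series remark) together with continuity of $\mathbb E[(\Im(bx))^{-1}]^{-1}$ to justify the passage to the limit; this is routine but deserves an explicit sentence.
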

\begin{proof}
For simplicity, let us replace $c\in\mathbb H^+(B)$ by 
$-c^{-1}\in\mathbb H^+(B)$, so that
$$
h_{bx}(-c^{-1})=\mathbb E\left[(c+bx)^{-1}\right]^{-1}-c.
$$
We shall split our problem in two and use the same method as in \cite{BPV2010}: assume that
$\varphi$ is an arbitrary positive linear functional on $B$
so that $\varphi(1)=1$. We define
$$
f_\varphi\colon\mathbb C^+\to\mathbb C^+,\quad f_\varphi(z)=\varphi
\left(\mathbb E\left[(\Re(c+bx)+z\Im(c+bx))^{-1}\right]^{-1}\right).
$$
As $\Im c\ge0$, and $\Im(bx)$, $\Im z$ are all strictly positive, it follows
that $\Im(\Re(c+bx)+z\Im(c+bx))^{-1}<0$, and, since $\mathbb E$ is
positive and faithful, $\Im\mathbb E
\left[(\Re(c+bx)+z\Im(c+bx))^{-1}\right]<0$. Thus,
$\Im\left(\mathbb E
\left[(\Re(c+bx)+z\Im(c+bx))^{-1}\right]^{-1}\right)>0$, and, in
particular, invertible. Since $\varphi\ge0,\varphi(1)=1$, the
Cauchy-Schwarz inequality tells us that $\Im f_\varphi(z)>0$
whenever $\Im z>0$. We take 
\begin{eqnarray*}
\lim_{z\to\infty}\frac{f_\varphi(z)}{z} & = & 
\varphi\left(\lim_{z\to\infty}\mathbb E
\left[\left(\frac{\Re(c+bx)}{z}+\Im(c+bx)\right)^{-1}\right]^{-1}\right)\\
& = & \varphi\left(\mathbb E
\left[(\Im(c+bx))^{-1}\right]^{-1}\right).
\end{eqnarray*}
The Nevanlinna representation \cite[Chapter III]{akhieser}
allows us to write the inequality $\Im f_\varphi(z)\ge\varphi\left(\mathbb E
\left[(\Im(c+bx))^{-1}\right]^{-1}\right)\Im z$ for all
$z\in\mathbb C^+$. Equality holds at one point of $\mathbb C^+$ 
if and only if $f_\varphi
(z)=\varphi\left(\mathbb E
\left[(\Im(c+bx))^{-1}\right]^{-1}\right)z+\text{real constant}$.%,
%which, as we have seen, is not the case unless $x=-b^{-1}c$,
%a situation that we can easily dismiss as trivial.
Taking $z=i$ in the above we get
$$\Im\varphi\bigl(\mathbb E
\left[(c+bx)^{-1}\right]^{-1}\bigr)\ge\varphi\bigl(\mathbb E
\left[(\Im(c+bx))^{-1}\right]^{-1}\bigr).$$ 
Since this inequality holds for all states $\varphi$ on $B$,
we conclude that 
$$\Im\mathbb E
\left[(c+bx)^{-1}\right]^{-1}\ge\mathbb E
\left[(\Im(c+bx))^{-1}\right]^{-1}.$$ 
We shall now prove that
$\mathbb E
\left[(\Im(c+bx))^{-1}\right]^{-1}>\Im c$:
%this is implied by the inequality 
%$\mathbb E
%\left[(\Im(c+bx))^{-1}\right]^{-1}>\Im c$, which we get from the 
%following chain of implications:
\begin{eqnarray}
\lefteqn{\mathbb E
\left[(\Im(c+bx))^{-1}\right]^{-1}>\Im c \iff
\mathbb E\nonumber
\left[(\Im c+\Im(bx))^{-1}\right]<(\Im c)^{-1}}\\
& \iff & \mathbb E\nonumber
\left[\sqrt{\Im c}(\Im c+\Im(bx))^{-1}\sqrt{\Im c}\right]<1\\
& \iff & \mathbb E\nonumber
\left[\left(1+\left(\sqrt{\Im c}\right)^{-1}\Im(bx)\left(\sqrt{\Im c}\right)^{-1}\right)^{-1}\right]<1\\
& \Longleftarrow & \left(1+\left(\sqrt{\Im c}\right)^{-1}\Im(bx)\left(\sqrt{\Im c}\right)^{-1}\right)^{-1}<1.\nonumber
\end{eqnarray}
The last inequality is trivially true by functional calculus and the invertibility of $\Im(bx)$. Putting the inequalities together gives
$$\varphi\bigl(\Im \mathbb E
\left[(c+bx)^{-1}\right]^{-1}\bigr)=
\Im\varphi\bigl(\mathbb E
\left[(c+bx)^{-1}\right]^{-1}\bigr)>\varphi(\Im c).$$
Since this is true for all positive $\varphi$, we get that
$h_{bx}$ maps $\mathbb H^+(B)$ into itself.

Next, we make use again of the same trick: we define 
$$
f_\varphi\colon\mathbb C^+\to\mathbb C^+,\quad f_\varphi(z)=\varphi\left(\mathbb E
\left[(c+\Re(bx)+z\Im(bx))^{-1}\right]^{-1}-c\right).
$$
As before, whenever $\Im c\ge0$ (the case $c=c^*$, for example, is not
excluded here), $\Im\mathbb E
\left[(c+\Re(bx)+z\Im(bx))^{-1}\right]^{-1}-\Im c>0$ for any $z\in\mathbb C^+$. We take again limit as $z\to\infty$ of 
$f_\varphi(z)/z$ to obtain $\varphi\left(\mathbb E\left[(\Im(bx))^{-1}\right]^{-1}\right)$. The same argument used above implies that
$\Im f_\varphi(z)\ge\varphi(\mathbb E\left[(\Im(bx))^{-1}\right]^{-1})\Im z$, and so, for $z=i$ we obtain
$\Im\varphi\left(\mathbb E
\left[(c+\Re(bx)+i\Im(bx))^{-1}\right]^{-1}-c\right)\ge
\varphi(\mathbb E\left[(\Im(bx))^{-1}\right]^{-1})$, for all $\varphi
\ge0,\varphi(1)=1$. This implies
$\Im h_{bx}(c)\ge\mathbb E\left[(\Im(bx))^{-1}\right]^{-1}$, as claimed.
\end{proof}

By the definition of the h transform \eqref{h}, we have 
$bh_x(cb)=h_{bx}(c)$. 
The previous lemma allows us to write
$$
\Im h_y(w)\ge0\implies\Im h_{bx}(h_y(w))\ge
\mathbb E\left[(\Im(bx))^{-1}\right]^{-1}>0,
$$
i.e. $\Im g_b(w)\ge\mathbb E\left[(\Im(bx))^{-1}\right]^{-1}>0$ for any $w\in\mathbb H^+(B)$, and $b$ with $\Im(bx)>0$.
Thus, we have shown that $g_b$ lands strictly into $\mathbb H^+(B)$
whenever $\Im(bx)>0$ and $\Im h_y(w)\ge0.$

\begin{remark}\label{pos}
It has been shown in \cite{BPV2010} that 
$\Im \mathbb E\left[(w-y)^{-1}\right]^{-1}\ge\Im w$ for any
$w\in\mathbb H^+(B)$ and $y=y^*\in\mathcal M$. By noting that
$-w^{-1}\in\mathbb H^+(B)$ if and only if $w\in\mathbb H^+(B)$,
we obtain that $\Im h_y(w)\ge0$ and $\Im h_y(-w^{-1})\ge0$
for any $w\in\mathbb H^+(B)$.
\end{remark}

We can now improve on our previous statement: for $x>0,y=y^*$,
and $b\in B$ so that $\Im (bx)>0$,
$$
\Im w>0\implies\Im h_y(w)\ge0\implies\Im h_{bx}(h_y(w))\ge
\mathbb E\left[(\Im(bx))^{-1}\right]^{-1}>0,
$$
i.e. $\Im g_b(w)\ge\mathbb E\left[(\Im(bx))^{-1}\right]^{-1}>0$ for any $w\in\mathbb H^+(B)$, and $b$ with $\Im(bx)>0$.

\begin{remark}
The functions h associated to selfadjoints $y\in\mathcal M$ have convergent power
series expansions around the origin. Indeed,
\begin{eqnarray*}
\lefteqn{\mathbb E\left[1-wy\right]-\mathbb E\left[(1-wy)^{-1}\right]^{-1}}\\
& = & \mathbb E\left[1-wy\right]\left(\mathbb E\left[(1-wy)^{-1}\right]
-E\left[1-wy\right]^{-1}\right)\mathbb E\left[(1-wy)^{-1}\right]^{-1}\\
& = & \mathbb E\left[1-wy\right]w\left(\sum_{n=1}^\infty\mathbb E
\left[y(wy)^n\right]-\mathbb E[y]\mathbb E[wy]^n
\right)\mathbb E\left[(1-wy)^{-1}\right]^{-1}.
\end{eqnarray*}
Thus, 
\begin{eqnarray*}
h_y(w)&=&w^{-1}\left(1-\mathbb E\left[(1-wy)^{-1}\right]^{-1}\right)\\
& = & w^{-1}\left[\mathbb E\left[1-wy\right]w\left(\sum_{n=1}^\infty\mathbb E
\left[y(wy)^n\right]-\mathbb E[y]\mathbb E[wy]^n
\right)\mathbb E\left[(1-wy)^{-1}\right]^{-1}\right]\\
& & \mbox{}+w^{-1}\mathbb E[wy]\\
&= & (1-\mathbb E[yw])\left[\left(\sum_{n=1}^\infty\mathbb E
\left[y(wy)^n\right]-\mathbb E[y]\mathbb E[wy]^n
\right)\mathbb E\left[(1-wy)^{-1}\right]^{-1}\right]+\mathbb E[y]
\end{eqnarray*}
which gives the power series espansion of $h_y$ around zero 
and shows that $h_y(0)=\mathbb E[y]$.
\end{remark}

We note that, as shown in the above Remark,
there exists a $\varepsilon>0$ so that $h_x,h_y$ are defined on
$\{w\in B\colon\|w\|<\varepsilon\}$ and 
$$
\max\{\|h_x(w)\|,\|h_y(w)\|\colon w\in B,\|w\|<\varepsilon\}
<2(\|x\|+\|y\|).
$$
Choosing $b\in B$ with $\|b\|<(\varepsilon/41)\cdot(\|x\|+\|y\|)^{-1}$
guarantees that 
$\|h_y(w)b\|<\varepsilon/10$ and $\|bh_x(h_y(w)b)\|<\varepsilon/2$,
so $g_b$ maps $\{w\in B\colon\|w\|<\varepsilon\}$ into
$\{w\in B\colon\|w\|<\varepsilon/2\}$. Thus, by the Earle-Hamilton
Theorem \cite[Theorem 11.1]{Din}, $g_b$ has a unique attracting fixed point in
$\{w\in B\colon\|w\|<\varepsilon\}$ which we shall denote by
$\omega_2(b)$, and 
$$
\lim_{n\to\infty}g_b^{\circ n}(w):=\omega_2(b)
$$
exists for all $w\in\{w\in B\colon\|w\|<\varepsilon\}$,
$\|b\|<(\varepsilon/41)\cdot(\|x\|+\|y\|)^{-1}$. The correspondence
$b\mapsto\omega_2(b)$ is clearly analytic, being a uniform limit
of analytic maps. Moreover, on a small enough neighbourhood of
zero, $\eta_y(\omega_2(b))=\eta_{xy}(b)$ by the uniqueness of the
fixed point guaranteed by the Earle-Hamilton Theorem.

For our fixed $x,y\in\mathcal M$ given in the statement of our theorem,
let us fix an $\varepsilon>0$ as above. Consider the set
$$
\left\{b\in B\colon\|b\|<\frac{\varepsilon}{41(\|x\|+\|y\|)}
\right\}\cup\left\{b\in B\colon
\Im (bx)>\frac{\varepsilon}{99\|x^{-1}\|(\|x\|+\|y\|)}\right\}.
$$
This set is clearly open and connected (we can find 
the element $b=i\frac{\varepsilon}{82\|x^{-1}\|(\|x\|+\|y\|)}$
in both open sets whose union we considered).
The above indicates that

\begin{enumerate}
\item $g
\colon\left\{b\in B\colon\|b\|<\frac{\varepsilon}{41(\|x\|+\|y\|)}
\right\}\times\left\{w\in B\colon\|w\|<\varepsilon\right\}\to
\left\{w\in B\colon\|w\|<\frac{\varepsilon}{2}
\right\}$ given by $g_b(w)=h_{bx}(h_y(w))$ is analytic;
\item
$g\colon\left\{b\in B\colon
\Im (bx)>\frac{\varepsilon}{99\|x^{-1}\|(\|x\|+\|y\|)}\right\}
\times\left\{w\in B\colon\Im w>\frac{\varepsilon}{200\|x^{-1}\|(\|x\|+\|y\|)}\right\}$
$\to
\left\{w\in B\colon\Im w>\mathbb E[(\Im(bx))^{-1}]^{-1}
\right\}$ given by $g_b(w)=h_{bx}(h_y(w))$ is analytic.
\end{enumerate}
The union of the two domains of $g$ mentioned above is again
connected, as we immediately note by identifying the point
$(b,w)=\left(\frac{i\varepsilon}{82\|x^{-1}\|(\|x\|+\|y\|)},\frac{i
\varepsilon}{2}\right)$ in both sets. Since the following chain
of implications holds,
$$
\Im(bx)>\kappa\implies(\Im(bx))^{-1}<\frac1\kappa\implies\mathbb E
[(\Im(bx))^{-1}]<\frac1\kappa\implies\mathbb E
[(\Im(bx))^{-1}]^{-1}>\kappa,
$$ 
the {\em strict} inclusion
$$
\left\{w\in B\colon\Im w>\mathbb E[(\Im(bx))^{-1}]^{-1}
\right\}\subset\left\{w\in B\colon\Im w>\frac{\varepsilon}{200\|x^{-1}\|(\|x\|+\|y\|)}\right\}
$$
holds whenever $\Im(bx)>\frac{\varepsilon}{99\|x^{-1}\|(\|x\|+\|y\|)}$.

Next, for any positive linear functional $\varphi$
on $B$ and $b\in B$ so that $\Im (bx)>\frac{\varepsilon}{99\|x^{-1}\|(\|x\|+
\|y\|)}$, we shall show that $\{\varphi(g_b^{\circ n}(w))
\}_{n\in\mathbb N}$ is bounded. The argument uses the fact that the
set $\{b\in B\colon\Im(bx)>0\}$ is convex. we choose $b_1\in B$, 
$\Im (b_1x)>\frac{\varepsilon}{99\|x^{-1}\|(\|x\|+\|y\|)}$,
$\|b_1\|<\frac{\varepsilon}{41(\|x\|+\|y\|)}$ and 
consider $t\mapsto\varphi(g_{tb+(1-t)b_1}^{\circ n}(w))$. The map
$[0,1]\ni t\mapsto g_{tb+(1-t)b_1}^{\circ n}(w)$ lands, for a 
fixed $w\in\mathbb H^+(B)$, entirely in $\mathbb H^+(B)+i
\frac{\varepsilon}{99\|x^{-1}\|(\|x\|+
\|y\|)}$, independently of $n\in\mathbb N$. Thus, there exists
a small enough simply connected complex neighbourhood
$V$ of $[0,1]$, which does {\em not} depend on $n$ so that
$V\ni t\mapsto g_{tb+(1-t)b_1}^{\circ n}(w)$ still lands in
$\mathbb H^+(B)$ for all $n\in\mathbb N$.
We obtain that all maps in the family 
$$
\left\{V\ni t\mapsto\varphi(g_{tb+(1-t)b_1}^{\circ n}(w))\right\}_{
n\in\mathbb N}
$$
take values in $\overline{\mathbb C^+}$. This means that the
family is normal (as a family of functions between complex domains). 
On the other hand, for 
$|t|$ very small, we know that 
$$
\lim_{n\to\infty}g_{tb+(1-t)b_1}^{\circ n}(w)=
\omega_2(tb+(1-t)b_1)\in B,
$$
which, in particular, means that $\lim_{n\to\infty}\varphi(
g_{tb+(1-t)b_1}^{\circ n}(w))=\varphi(\omega_2(tb+(1-t)b_1))$ exists
and is finite. This, together with the above argued normality, implies
that $\lim_{n\to\infty}
\varphi(g_{tb+(1-t)b_1}^{\circ n}(w))$ exists as a holomorphic function
from the given neighbourhood $V$ of $[0,1]$ to $\mathbb C^+$.
Now any linear functional on $B$ has a unique Jordan decomposition
as a linear combination of four positive linear functionals.
Thus, for any $\varphi$ in the dual of $B$, the family
$\{|\varphi(g_{tb+(1-t)b_1}^{\circ n}(w))|\colon t\in V,n\in\mathbb N
\}$ is bounded. The uniform boundedness principle
(see, for example, \cite[Lemma 1]{AT}) guarantees that
$\{\|g_{tb+(1-t)b_1}^{\circ n}(w)\|\colon t\in V,n\in\mathbb N
\}$ is bounded. The fact that $B$ is a von Neumann algebra implies
that $\{g_b^{\circ n}(w)\}_{n\in\mathbb N}$ must have a w-convergent subsequence. Since $\Im g_b^{\circ n}(w)>\mathbb E\left[
(\Im(bx))^{-1}\right]^{-1}$, it is clear that any such limit
point must belong to $\mathbb H^+(B)$. However, as noted before,
$\lim_{n\to\infty}\varphi(g_{tb+(1-t)b_1}^{\circ n}(w))$
exists for all $t\in V$, $\varphi\in B^*$. Thus, there can only
be one limit point, i.e. $\lim_{n\to\infty}g_{b}^{\circ n}(w)
=\omega_2(b)$ must exist for all $b$ with $\Im(bx)>0$.

Note that in fact we have shown more: the limit function
$\omega_2(b):=\lim_{n\to\infty}
g_b^{\circ n}(w)$ is G\^{a}teaux holomorphic \cite[Definition 2.1]{Dineen} on all of the set $\{b\in B\colon\Im(bx)>0\}$.
It is known to be holomorphic close to the origin of $B$,
where $\eta_y(\omega_2(b))=\eta_{xy}(b)$. Using the 
same convexity trick as in the previous paragraph and
the identity principle for the usual (scalar) analytic
functions, we conclude that 
$$
\eta_y(\omega_2(b))=\eta_{xy}(b)\text{ and }g_b(\omega_2(b))=
\omega_2(b),\quad \Im(bx)>0.
$$
Thus, we have proved parts (1)--(3) of our theorem. The property
of $\omega_1$ is trivial.
\end{proof}

The above theorem, as stated, has the inconvenience that it
does not cover the posssible case of non-invertible 
positive $x$ and the case of non-invertible 
$\mathbb E[y]$. However, when $B$ is finite dimensional
(a matrix algebra), we can use normality of some families of analytic 
maps from $\mathbb C^N$ to itself to considerably improve
our result. We shall use the same notations as in Theorem \ref{Main-op}.

\begin{prop}\label{main-finite}
Let $B$ be finite-dimensional.
For any $x\ge0$, $y=y^*$ free over $B$, there exists a
domain $\mathcal D\subset B$ containing $\mathbb C^+\cdot 1$ and
an analytic map
$\omega_2\colon\mathcal D\to\mathbb H^+(B)$
so that 
$$
\eta_y(\omega_2(b))=\eta_{xy}(b)\text{ and }g_b(\omega_2(b))=
\omega_2(b),\quad b\in\mathcal D.
$$
Moreover, $\omega_2(b)=\lim_{n\to\infty}g_b^{\circ n}(w)$ for any
$w\in\mathbb H^+(B)$, $b\in\mathcal D$.
\end{prop}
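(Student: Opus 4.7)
The plan is to reduce to Theorem \ref{Main-op} by regularization and then pass to a limit, exploiting that when $B$ is finite-dimensional $\mathbb{H}^+(B)$ is Kobayashi-hyperbolic (after a Cayley transform it becomes a bounded symmetric domain), so normal-family arguments apply freely. For each small $\epsilon>0$ I would pick $\delta_\epsilon\to 0$ such that $\mathbb{E}[y]+\delta_\epsilon\cdot 1$ is invertible in $B$ (possible since the self-adjoint element $\mathbb{E}[y]$ has only finitely many eigenvalues in $B$), and set $x_\epsilon=x+\epsilon\cdot 1$, $y_\epsilon=y+\delta_\epsilon\cdot 1$. Then $x_\epsilon>0$, both $\mathbb{E}[x_\epsilon]$ and $\mathbb{E}[y_\epsilon]$ are invertible, and $x_\epsilon,y_\epsilon$ remain free over $B$ because translations by elements of $B$ preserve $B$-freeness. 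Theorem \ref{Main-op} then produces G\^{a}teaux holomorphic subordination functions $\omega_2^\epsilon$ on $\{b\in B:\Im(bx_\epsilon)>0\}$, together with the associated self-maps $g_b^\epsilon(w)=bh_{x_\epsilon}(h_{y_\epsilon}(w)b)$ of $\mathbb{H}^+(B)$.

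For $b=z\cdot 1$ with $\Im z>0$, we have $\Im(bx_\epsilon)=(\Im z)x_\epsilon\ge\epsilon(\Im z)>0$, so $\mathbb{C}^+\cdot 1$ lies in every perturbed domain. I would take $\mathcal{D}$ to be the connected component, containing $\mathbb{C}^+\cdot 1$, of the interior of the set of $b\in B$ on which the limiting procedure below succeeds. Since $B$ is finite-dimensional and $\omega_2^\epsilon$ maps into the hyperbolic domain $\mathbb{H}^+(B)\subset B\simeq\mathbb{C}^{\dim B}$, the family $\{\omega_2^\epsilon\}_{\epsilon>0}$ is normal. Extracting a locally uniformly convergent subsequence $\omega_2^{\epsilon_k}\to\omega_2$ and passing to the limit in the identities $\eta_{y_{\epsilon_k}}(\omega_2^{\epsilon_k}(b))=\eta_{x_{\epsilon_k}y_{\epsilon_k}}(b)$ and $g_b^{\epsilon_k}(\omega_2^{\epsilon_k}(b))=\omega_2^{\epsilon_k}(b)$ yields $\eta_y(\omega_2(b))=\eta_{xy}(b)$ and $g_b(\omega_2(b))=\omega_2(b)$. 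Uniqueness of the limit across subsequences follows from the moment-level determination of $\omega_2$ near the origin (where $\omega_2=\eta_y^{-1}\circ\eta_{xy}$ is prescribed by the power series expansions in the data, which depend continuously on $\mu_{x_\epsilon},\mu_{y_\epsilon}$); so the full net $\omega_2^\epsilon$ converges and $\omega_2$ is analytic on $\mathcal{D}$.

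For the iteration statement $\omega_2(b)=\lim_n g_b^{\circ n}(w)$ at an arbitrary $w\in\mathbb{H}^+(B)$, I would apply normality once more to the family $\{g_b^{\circ n}(w)\}_n$: any cluster point is a fixed point of $g_b$ in $\mathbb{H}^+(B)$, since $g_b$ is continuous and, by Lemma \ref{domain} and Remark \ref{pos} applied to the regularizations $x_\epsilon$, cluster points cannot escape to $\partial\mathbb{H}^+(B)$. The main obstacle will be uniqueness of the fixed point when $x\ge 0$ is not invertible: in Theorem \ref{Main-op} uniqueness came from the strict lower bound $\Im g_b(w)\ge\mathbb{E}[(\Im(bx))^{-1}]^{-1}>0$, which forces $g_b$ to be a strict contraction in the Kobayashi metric of $\mathbb{H}^+(B)$, and this bound degenerates when $x$ fails to be invertible. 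To close the gap I would argue that on $\mathcal{D}$ the perturbed fixed points $\omega_2^\epsilon(b)$ converge to a single point, then combine finite-dimensional Denjoy--Wolff-type behaviour with the relative compactness of the orbit closure $\overline{\{g_b^{\circ n}(w)\}}\subset\mathbb{H}^+(B)$ to conclude that every orbit converges to that common limit.
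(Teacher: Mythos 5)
Your proposal shares the two main ingredients of the paper's proof, namely regularization of $x$ and $y$ and exploiting normality of analytic families on finite-dimensional $B$, but it orders them differently, and that reordering is exactly what leaves the gap you acknowledge in the last paragraph. The paper does \emph{not} obtain $\omega_2$ as a limit of the $\omega_2^\varepsilon$ and then try to show that the iterates of the unregularized $g_b$ converge to it. It goes the other way: it first takes $\mathcal D=\text{int}\{b\in B:\Im(bx)\ge0,\ b\text{ invertible in }B\}$, observes (via Remark \ref{pos}, which needs no invertibility of $x$) that $g_b$ maps $\mathbb H^+(B)$ to itself for $b\in\mathcal D$, and proves convergence of the iterates $g_b^{\circ n}(w)$ directly on all of $\mathcal D$: near the origin this is the Earle--Hamilton argument from the proof of Theorem \ref{Main-op} (the norm estimates on $h_x,h_y$ make $g_b$ a strict contraction of a small ball, no strict positivity of $\Im(bx)$ or invertibility of $x$ required), and then the family $\{\,b\mapsto g_b^{\circ n}(w)\,\}_{n}$ of analytic maps $\mathcal D\to\mathbb H^+(B)$ is normal because $\dim B<\infty$, so a Vitali/Montel argument propagates the pointwise convergence from the small ball to the whole domain $\mathcal D$. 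The identity $g_b(\omega_2(b))=\omega_2(b)$ then follows by the identity principle. Only afterwards does the paper regularize, and only to transfer the relation $\eta_{y_\varepsilon}\circ\omega_2^\varepsilon=\eta_{x_\varepsilon y_\varepsilon}$ to $\eta_y\circ\omega_2=\eta_{xy}$ by continuity.

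Two concrete problems with your version as written. First, the Denjoy--Wolff-type repair you sketch for the iteration statement is not carried out and would not work without additional input: Lemma \ref{domain} requires $x$ invertible, and applying it to $x_\varepsilon$ gives a lower bound $\mathbb E[(\Im(bx_\varepsilon))^{-1}]^{-1}$ that can degenerate to $0$ as $\varepsilon\to0$, so it does not yield relative compactness of the orbit $\{g_b^{\circ n}(w)\}$ inside $\mathbb H^+(B)$ for the unregularized $g_b$. Moreover even the existence of a fixed point of $g_b$ in $\mathbb H^+(B)$ does not by itself imply convergence of all orbits in a bounded symmetric domain (rotational parts at the fixed point can obstruct it); one would have to argue additionally that $g_b$ is a local strict contraction at $\omega_2(b)$, which is precisely what the Vitali argument circumvents. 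Second, your uniqueness argument for the subsequential limits of $\omega_2^{\varepsilon_k}$ invokes $\omega_2=\eta_y^{-1}\circ\eta_{xy}$ near the origin, but $\eta_y^{-1}$ does not exist when $\mathbb E[y]$ is not invertible; the correct moment-level pinning is the fixed-point equation $\omega_2(b)=g_b(\omega_2(b))$ together with analyticity of $b\mapsto b^{-1}\omega_2(b)$, as established in the proof of Theorem \ref{Main-op}. With these two repairs -- replacing the Denjoy--Wolff sketch by the Earle--Hamilton-plus-Vitali argument for the iterates, and replacing $\eta_y^{-1}\circ\eta_{xy}$ by the fixed-point characterization -- your proof lines up with the paper's.
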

\begin{proof}
We shall take as $\mathcal D=\text{int}\{b\in B\colon\Im(bx)\ge0,b\text{ invertible in }B\}$. 
By Remark \ref{pos},
$g_b\colon\mathbb H^+(B)\to\mathbb H^+(B)$ is well-defined and analytic
for any $b$ with $\Im(bx)\ge0$. The existence of 
an attracting fixed point for $g_b$ when $b$ is very close to
the origin follows by exactly the same argument as in the
proof of Theorem \ref{Main-op}. 
Since the family $\{\mathcal D\ni b\mapsto g_b^{\circ n}(w)\}_{n\in
\mathbb N}$ is normal, we conclude as before that
$\omega_2(b):=\lim_{n\to\infty}g_b^{\circ n}(w)$ exists
and is analytic. 
(Here the fact that $\text{dim}(B)<\infty$
is essential!) By the identity principle, it follows that
$\omega_2(b)=g_b(\omega_2(b))$ for all $b\in\mathcal D$, as
the relation is known to hold for $b$ of small norm.

Up to this point, we have shown the existence of an $\omega_2$
defined on the open set $\mathcal D\supset\mathbb C^+\cdot1$
which satisfies $g_b(\omega_2(b))=\omega_2(b)$, independently
from the invertibility of either $x$ or $\mathbb E[y]$. Since
$\text{dim}(B)<\infty$, the spectrum of $\mathbb E[y]$ is a finite
set in $\mathbb R$, so it follows that non-invertibility of 
$\mathbb E[y]$ is equivalent to $\mathbb E[y]\in B$ having
zero as eigenvalue. For any $\varepsilon>0$, $\mathbb E[y+\varepsilon
\cdot1]=\mathbb E[y]+\varepsilon
\cdot1$ is then invertible in $B$, and so is $x+\varepsilon\cdot1$
in $\mathcal M$. If $g_{b,\varepsilon}(w)=
h_{b(x+\varepsilon)}(h_{y+\varepsilon}(w))$, $w\in\mathbb H^+(B),
\Im(b(x+\varepsilon))>0$, it follows that
$g_{b,\varepsilon}\to g_b$ uniformly on compact subsets of
$\mathbb H^+(B)$ as $\varepsilon\to0$. Since for $b$ of small
norm, $\omega_2(b)$ is an attracting fixed point for $g_b$,
it follows that the small attracting fixed points of 
$g_{b,\varepsilon}$, which we shall call $\omega_2^\varepsilon(b)$,
converge to $\omega_2(b)$. Normality allows us to conclude that
$\omega_2^\varepsilon\to\omega_2$. 

On the other hand, as seen in Theorem \ref{Main-op}, 
$$
\eta_{y+\varepsilon}\circ\omega_2^\varepsilon=\eta_{(x+\varepsilon)
(y+\varepsilon)},\quad\varepsilon>0.
$$
Since $\eta_{y+\varepsilon}\to\eta_y$ and $\eta_{(x+\varepsilon)
(y+\varepsilon)}\to\eta_{xy}$, we conclude that
$$
\eta_y\circ\omega_2=\eta_{xy}\quad\text{on }\mathcal D.
$$
\end{proof}

\section{Numerical Implementation} 

We now consider some of the practical implications of Theorem \ref{Main-op} in the computation of the free multiplicative convolution of operator-valued random variables - in particular, those which are represented by $n\times n$ matrices. 
The general frame for those examples will be the following. Our problems will be given (directly or after some manipulations) by two matrices
$x=(x_{ij})$ and $y=(y_{ij})$ where the entries of those matrices are living
in some non-commutative probability space $(\cA,\tau)$, and where, with respect to $\tau$, the entries of $x$ are free from the entries of $y$. 

Let us set 
$\cM:=M_n(\cA)=M_n(\CC)\otimes \cA$
and consider the trace
$\varphi:=\tr\otimes \tau: \cM\to\CC$
and the conditional expectation
$\EE:=\id\otimes \tau: \cM\to M_n(\CC)$.

Thus $x$ and $y$ are elements in the non-commutative probability space
$(\cM,\varphi)$ and we are actually interested in the scalar-valued distribution of $z:=xy$ (or some variant thereof) with respect to $\varphi=\tr\otimes\tau$. 
However, the freeness between the entries of $x$ and the entries of $y$ with respect to $\tau$ does in general not ensure freeness between $x$ and $y$
with respect to $\tr\otimes\tau$. What it implies is operator-valued freeness
between $x$ and $y$ in the non-commutative operator-valued probability
space $(\cM,\EE,M_n(\CC))$. Thus we can first calculate the operator-valued distribution of $z$ with respect to $\EE=\id\otimes \tau$
by using our free convolution results and then get the scalar-valued distribution of $z$ with respect to $\varphi=\tr\otimes\tau$ by applying the trace.
More specifically, if $G_z$ is the $M_n(\CC)$-valued Cauchy transform of $z$,
then $\tr(G_z)$ is the scalar-valued Cauchy-transform of $z$. Thus the
spectral distribution of $z$ with respect to $\tr\otimes\tau$ is given, by virtue of
the Cauchy-Stieltjes inversion formula, by:
\begin{equation} \label{cau_stil} 
d\mu(t) = \lim_{\epsilon \rightarrow 0^{+}} \frac{-1}{\pi} \Im \left( \text{tr} (G_z( (t+i\epsilon)I_n)) \right), \ t\in \mathbb{R} \end{equation}
where $I_n$ is the $n\times n$ identity matrix and $\text{tr}:=\frac 1n\text{Tr}$ is the normalized trace on $n\times n$ matrices. So our objective requires that we compute the Cauchy transform of $xy$ at points of the form $zI_n$, where $z\in \mathbb{C}$.  

%\begin{figure}[h]
%\begin{center}
%\includegraphics[scale=0.4]{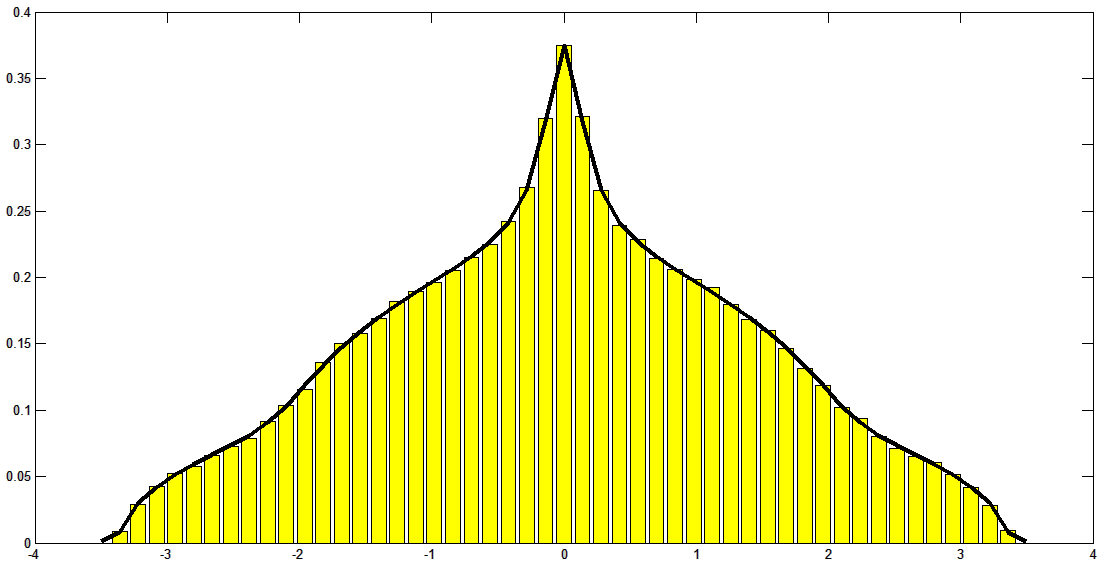}
%\end{center}
%\caption{Spectral distribution of $S_1$ - random matrix simulations (histogram) compared with numerically calculated density using fixed point method of \cite{HRS07}.}
%\label{a}
%\end{figure}

Note now that by (\ref{F}) and (\ref{h}), we have: 

\begin{equation} G_{xy}(zI_n) = ( zI_n - h_{xy}(z^{-1}I_n))^{-1} \end{equation} 

\noindent so computing $G_{xy}$ is equivalent to computing $h_{xy}$. Moreover, by Theorem \ref{Main-op} and (\ref{eta}) we see that 

\begin{equation} 
zh_{xy}(zI_n)=\omega_2(zI_n)h_y(\omega_2(zI_n)) 
\end{equation}

\noindent Theorem \ref{Main-op} expresses the function $\omega_2$ as the limit of iteratively composing the function $g_b$ with itself. (Since in our case $B=M_n(\CC)$ is finite-dimenisonal, we are actually in the realm of Prop.~\ref{main-finite}, and thus do not have to bother about the invertibility assumptions.) It is now clear that given
the operator-valued $h$ transform of $x$ and $y$, we can numerically compute the $h_{xy}$, and thus the spectral distribution of the product $xy$. We consider several concrete examples below, in Sections \ref{semi_ex} and \ref{dis_ex}. 

%\begin{figure}[h]
%\begin{center}
%\includegraphics[scale=0.4]{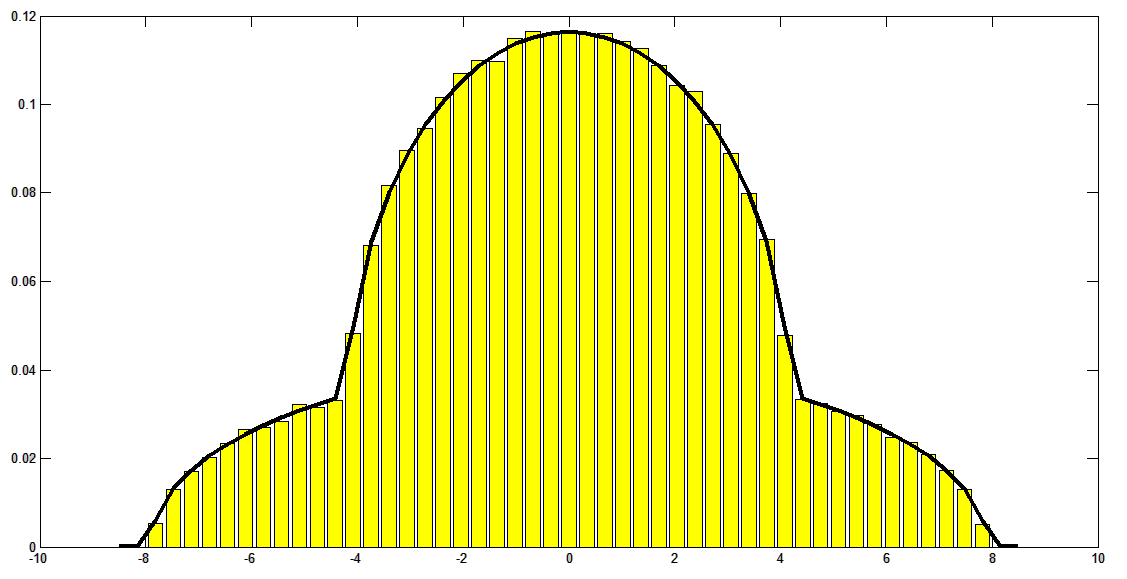}
%\end{center}
%\caption{Spectral distribution of $S_2$ - random matrix simulations (histogram) compared with numerically calculated density using fixed point method of \cite{HRS07}.}
%\label{b}
%\end{figure}

One obstacle to applying this technique to general problems is the difficulty in analytically computing the $h$ transform of many elements. It is easy to find the exact expression for the $h$ transform of discrete distributions (see Section \ref{dis_ex}), and numerical methods exist to compute the $h$ transform in other cases (see Section \ref{semi_ex} for example). 

\subsection{The Product of Two Free Operator-Valued Semicirculars}\label{semi_ex}

Let $s_1$, $s_2$, $s_3$, and $s_4$ be free, semi-circular random variables,
in some scalar-valued non-commutative probability space $(\cA,\tau)$. 
Consider the matrices $S_1$ and $S_2$ defined by:
\begin{equation} S_1 = \begin{pmatrix}
  s_1 & s_1  \\
  s_1 & s_2  
 \end{pmatrix}, \text{     }S_2 = \begin{pmatrix}
  s_3+s_4 & 2s_4  \\
  2s_4 & s_3-3s_4 
 \end{pmatrix}
\end{equation}

Matrices $S_1$ and $S_2$ represent limits of random matrices, where $s_1,\dots,s_4$ are replaced by independent Gaussian random matrices.

As before, we set 
$\cM:=M_2(\cA)=M_2(\CC)\otimes \cA$
and consider the trace
$\varphi:=\tr\otimes \tau: \cM\to\CC$
and the conditional expectation
$\EE:=\tau\otimes\id: \cM\to M_2(\CC)$.

We wish to compute the spectral distribution of $(S_2+cI_2)S_1$ in the scalar-valued probability space $(\cM,\varphi)$, where $c$ is some constant chosen large enough to make $S_2+cI_2$ positive. 
Since $S_1$ and $S_2$ are not free in $(\cM,\varphi)$ we cannot invoke usual free probability theory to achieve our goal. However,
$S_1$ and $S_2$ are free operator-valued semicircular elements in the 
operator-valued probability space $(\cM,\EE,M_2(\CC))$. Thus we can do the calculations on the operator-valued level and in the end go down to the scalar-valued level by taking the trace.

The first task is to compute
 the operator-valued $h$ transforms, $h_{S_1}$ and $h_{S_2}$. In this case, 
an analytic equation for the $h$ transforms is difficult to achieve. However, we can compute these $h$ transforms numerically using the method described in \cite{HRS07}. In brief, this involves expressing the Cauchy transform of the operator-valued semicircular in terms of the fixed point of a contraction mapping. Specifically, if we define
\begin{equation} W(b) = \lim_{n \rightarrow \infty} \mathcal{F}^{\circ n}_{b}(W_0) \end{equation}
where $\mathcal{F}_b(W) = \left( -ib + \mathbb{E}[SbS] \right)^{-1}$, then $G_{S}(b) = -iW(b)$. Note that we require the initial state $W_0$ to satisfy $\Im(W_0)>0$; convergence of the above iteration scheme is ensured by arguments from \cite{HRS07}. In our case, with
$b=(b_{ij})_{i,j=1}^2$,
we have

\begin{equation*}  \mathbb{E}[S_1bS_1] = \begin{pmatrix} b_{11} + b_{12}+b_{21}+b_{22} & b_{11}+ b_{21} \\ b_{11}+b_{12} & b_{11}+b_{22} \end{pmatrix}\end{equation*}
and
\begin{equation*}   \mathbb{E}[S_2bS_2] = \begin{pmatrix} 2b_{11} + 2b_{21}+2b_{12} + 4b_{22} & 2b_{11}+ -2b_{12} +4b_{21} -6b_{22} \\ 2b_{11} +4b_{12}-2b_{21} -6b_{22} & 4b_{11} -6b_{12} -6b_{21} +10b_{22} \end{pmatrix}\end{equation*}

We compare the spectral distribution of $S_1$ and $S_2$ computed using this method and the Cauchy-Stieltjes inversion formula to random matrix simulations in Fig. \ref{c}.

Finally, using the numerically computed $h$ transforms of $S_1$ and $S_2+cI_2$ we used the iterative method discussed here to compute the $h$ transform of their product. In Figure \ref{c}, we compare the distribution computed using our method to random matrix simulations of the ground truth spectral distribution of 
$\sqrt{S_2+cI_2}S_1\sqrt{S_2+cI_2}$. 

\begin{figure}[h]
\begin{center}
\includegraphics[scale=0.13]{a}\quad
\includegraphics[scale=0.13]{b}\quad
\includegraphics[scale=0.13]{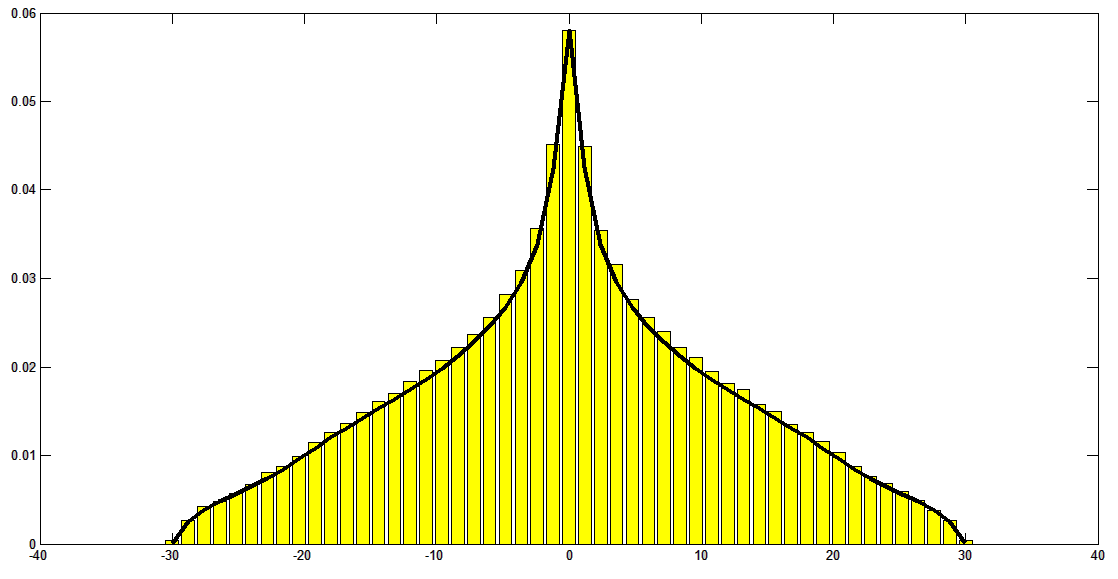}
\end{center}
\caption{Spectral distribution of $S_1$ (left), $S_2$ (middle), and $(S_2 + 8.5I_2)S_1$ (right) - random matrix simulations (histogram) compared with numerically calculated density, using fixed point method of \cite{HRS07} for $S_1$ and $S_2$ and using our method for $(S_2 + 8.5I_2)S_1$ .}
\label{c}
\end{figure}

For the sake of variety, we consider another operator-valued semi-circular example. Let now $\{s_i \}_{i=1}^6$ be free semi-circular elements, and let:

\begin{equation} S_{1}^{\prime} = \begin{pmatrix}
  -10s_1 & 2s_2 & 30s_3  \\
  2s_2 & -4s_3 & 5s_1 \\
  30s_3 & 5s_1 & 16s_1  
 \end{pmatrix} \text{ and } S_2^{\prime} = \begin{pmatrix}
  -2s_4 + 3s_6 & 3s_5 + 30s_6 & s_6 \\
  3s_5 + 30s_6 & s_4+s_5+s_6 & s_4 \\
  s_6 & s_4 & 40s_4  
 \end{pmatrix} \end{equation}

We follow the same pattern as previously: applying the numerical method proposed in \cite{HRS07} to compute the individual $h$ transforms of $S_1^{\prime}$ and $S_2^{\prime}$ (see Figure \ref{s1z}), and then using our iterative method to compute the spectral distributions of $(S_2^{\prime} + 85I_3)(S_1^{\prime} + 40I_3)$ and $(S_2^{\prime} + 85I_3)(S_1^{\prime} + 75I_3)$ (see Figure \ref{joint1z}). 

\begin{figure}[h]
\begin{center}
\includegraphics[scale=0.2]{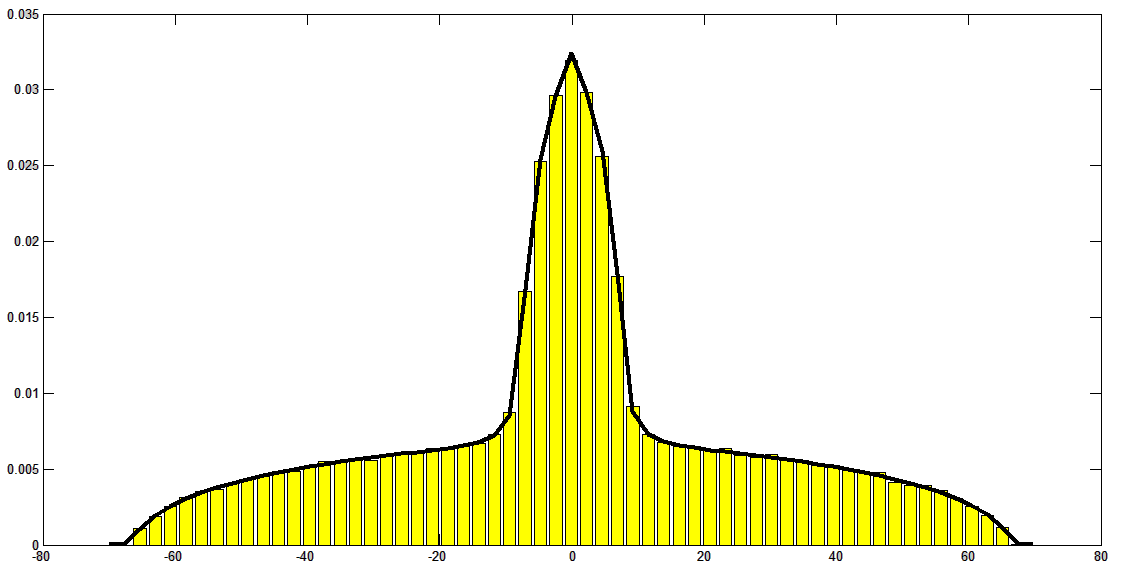}\qquad
\includegraphics[scale=0.2]{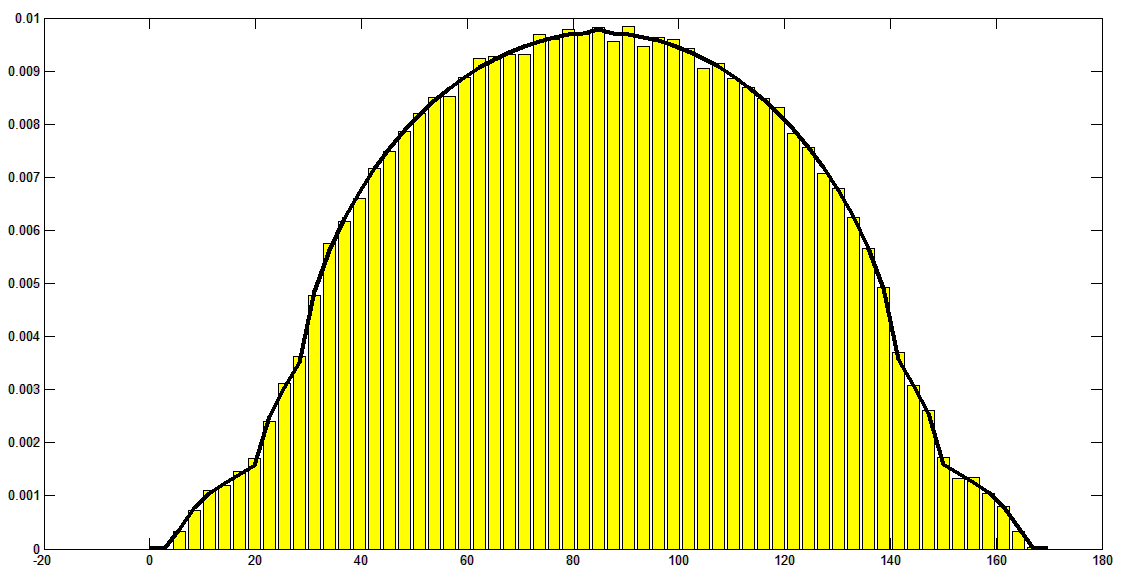}
\end{center}
\caption{Spectral distribution of $S_1^{\prime}$ (left) and $S_2'$ (right) - random matrix simulations (histogram) compared with numerically calculated density using fixed point method of \cite{HRS07}.}
\label{s1z}
\end{figure}

\begin{figure}[h]
\begin{center}
\includegraphics[scale=0.2]{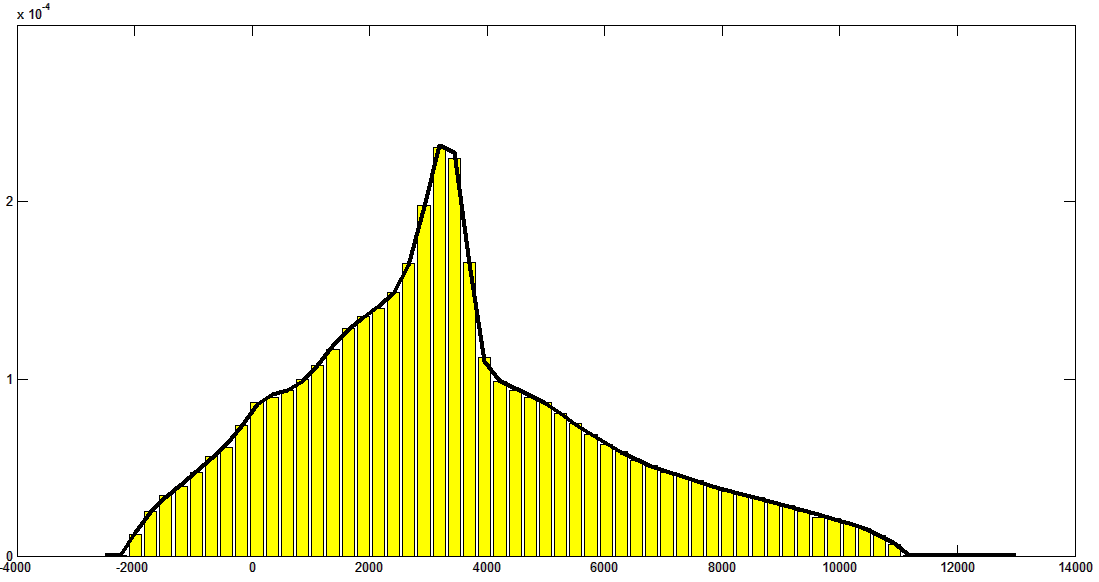}\qquad
\includegraphics[scale=0.2]{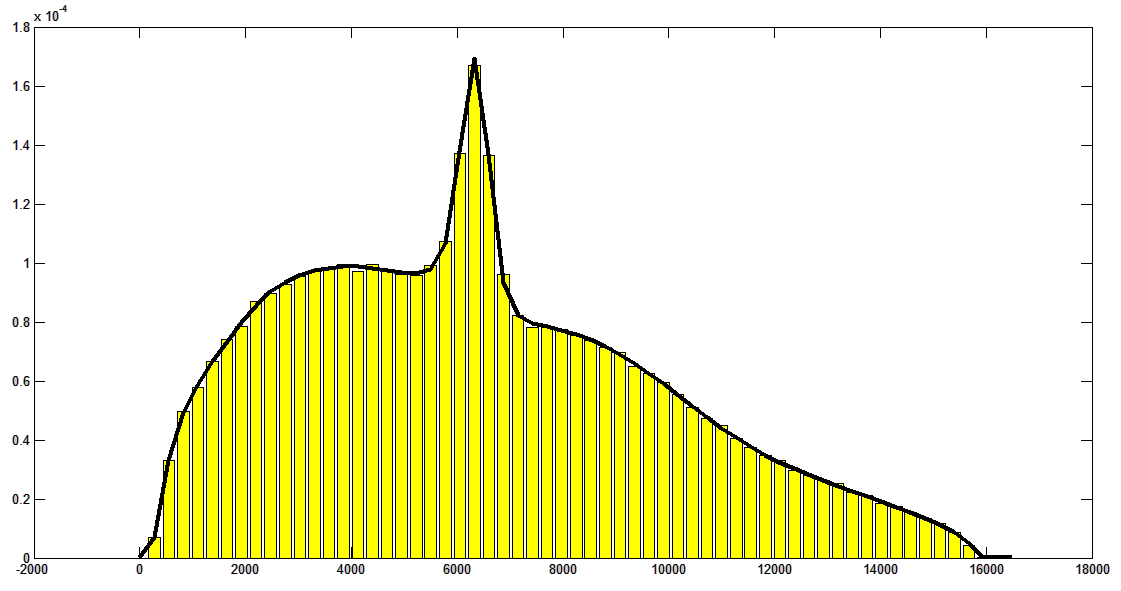}
\end{center}
\caption{Spectral distribution of $(S_2^{\prime} + 85I_2)(S_1^{\prime}+40I_3)$ (left) and $(S_2^{\prime} + 85I_2)(S_1^{\prime}+75I_3)$ (right)
- random matrix simulations (histogram) compared with numerically calculated density using our method.}
\label{joint1z}
\end{figure}

%\begin{figure}[h]
%\begin{center}
%\includegraphics[scale=0.4]{joint2z}
%\end{center}
%\caption{Spectral distribution of $(S_2^{\prime} + 85I_2)(S_1^{\prime}+75I_3)$ - random matrix simulations (histogram) compared with numerically calculated density using our method.}
%\label{joint2z}
%\end{figure}

%\begin{figure}[h]
%\begin{center}
%\includegraphics[scale=0.4]{s2z}
%\end{center}
%\caption{Spectral distribution of $S_2^{\prime}$ - random matrix simulations (histogram) compared with numerically calculated density using fixed point method of \cite{HRS07}.}
%\label{s2z}
%\end{figure}

%\begin{figure}[h]
%\begin{center}
%\includegraphics[scale=0.4]{joint1z}
%\end{center}
%\caption{Spectral distribution of $(S_2^{\prime} + 85I_2)(S_1^{\prime}+40I_3)$ - random matrix simulations (histogram) compared with numerically calculated density using our method.}
%\label{joint1z}
%\end{figure}

%\begin{figure}[h]
%\begin{center}
%\includegraphics[scale=0.4]{joint2z}
%\end{center}
%\caption{Spectral distribution of $(S_2^{\prime} + 85I_2)(S_1^{\prime}+75I_3)$ - random matrix simulations (histogram) compared with numerically calculated density using our method.}
%\label{joint2z}
%\end{figure}

\subsection{Distribution of $dcd+d^2cd^2$}\label{dis_ex}

We consider now a special case of the problem from the Introduction, namely of finding the distribution of $dcd + d^2cd^2$, where $c$ and $d$ are free from one another. As noted in the Introduction

\begin{equation} \begin{pmatrix}
  dcd + d^2cd^2 & 0  \\
  0 & 0  
 \end{pmatrix} = \begin{pmatrix}
  d & d^2  \\
  0 & 0
 \end{pmatrix} \begin{pmatrix}
  c & 0  \\
  0 & c
 \end{pmatrix}\begin{pmatrix}
  d & 0  \\
  d^2 & 0
 \end{pmatrix}
\end{equation}
has the same distribution as

\begin{equation}  \begin{pmatrix}
  c & 0  \\
  0 & c
 \end{pmatrix}\begin{pmatrix}
  d^2 & d^3  \\
  d^3 & d^4
 \end{pmatrix} 
\end{equation}

\noindent Now, since $c$ and $d$ are free, the problem of calculating the distribution of $dcd + d^2cd^2$ has been transformed into one which can be solved numerically 
using the iterative method proposed in this paper. It is crucial to note that since $c$ and $d$ are free, we have that $\begin{pmatrix}
  d^2 & d^3  \\
  d^3 & d^4
 \end{pmatrix}$ and $\begin{pmatrix}
  c & 0  \\
  0 & c
 \end{pmatrix}$ are free over the matrices $M_2(\mathbb{C})$.

In order to apply our iterative method from Section 2, we need as input the
operator-valued Cauchy transforms (or the $h$ transforms) of the matrices
$x$ and $y$, where 
\begin{equation}   
x = \begin{pmatrix}
  c & 0  \\
  0 & c
 \end{pmatrix} \text{ and } y = \begin{pmatrix}
  d^2 & d^3  \\
  d^3 & d^4
 \end{pmatrix}. \end{equation}
For instance, we have that
\begin{equation} G_x(b) = \mathbb{E} \left[ (b-x)^{-1} \right] = \EE\left[\frac{1}{  (b_{11}-c)(b_{22}-c) - b_{12}b_{21} }  \begin{pmatrix}
  b_{22}-c & -b_{12}  \\
  -b_{21}  & b_{11}-c
 \end{pmatrix}\right] \end{equation}
and a similar formula for $G_y$.
(Note that the entries of the $2\times 2$ matrix $b-x$ commute and thus the usual formula for matrix inversion applies.) Thus we need to be able to
calculate quantities like
$$\tau\left[ [(b_{11}-c)(b_{22}-c) - b_{12}b_{21}]^{-1}(b_{22}-c)\right]$$
in order to calculate $G_x(b)$.  

For instance, if we assume here that $c$ and $d$ are both discretely distributed, such expressions can be readily
written down in analytic forms.
Consider the concrete example where $c$ is uniformly distributed with discrete support $ \{0.4, 0.7, 1, 1.3, 1.5, 1.7 \}$ and $d$ is uniformly distributed with discrete 
support $\{ 0.5, 1, 1.5, 2, 2.5, 3 \}$. In this case, the distribution of $dcd + d^2cd^2$ computed using the iterative method proposed in this paper is shown and compared to histograms in Figure \ref{t}.  

We also compute the distribution of $dcd + d^2cd^2$ where $d$ is uniformly distributed over support $ \{0.4, 0.7, 1, 1.3, 1.5, 1.7 \}$ and $c$ is a shifted 
semi-circular element (Figure \ref{t}).  

\begin{figure}[h]
\begin{center}
\includegraphics[scale=0.2]{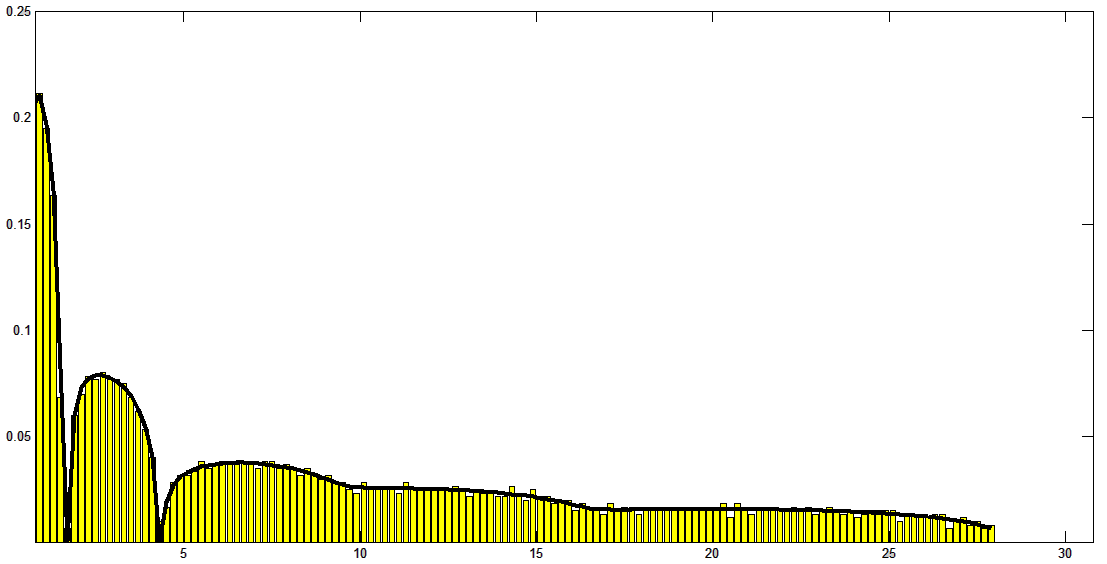}\qquad
\includegraphics[scale=0.2]{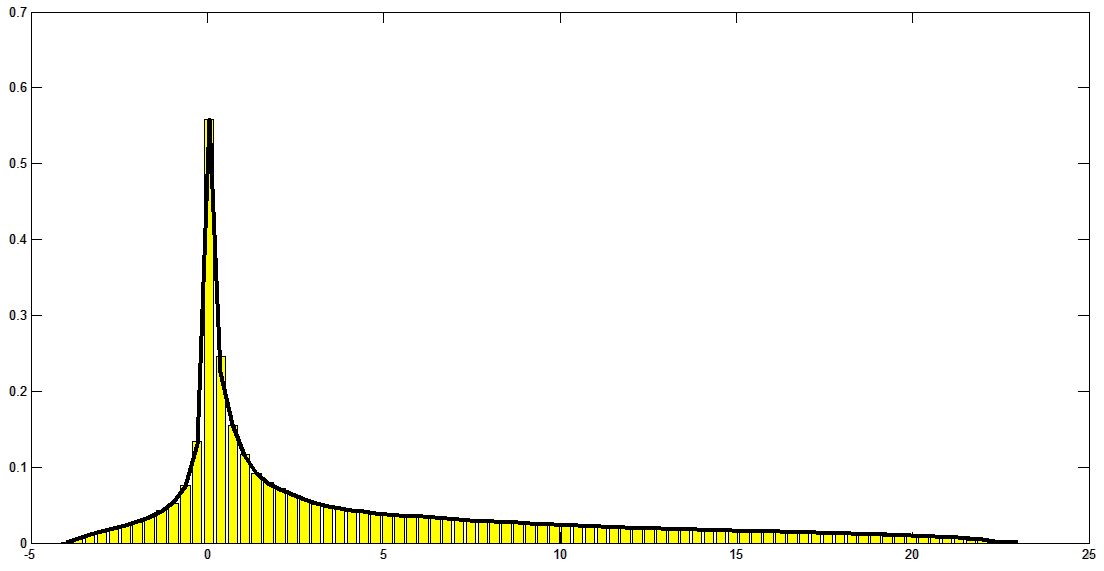}
\end{center}
\caption{Comparison of distribution of $dcd + d^2cd^2$ where c is uniformly distributed over $ \{0.4, 0.7, 1, 1.3, 1.5, 1.7 \}$ and d is uniformly distributed over 
  $\{ 0.5, 1, 1.5, 2, 2.5, 3 \}$ (left) and where c is a shifted semi-circle and d is uniformly distributed over 
  $\{ 0.4, 0.7, 1, 1.3, 1.5, 1.7 \}$ (right).}
\label{t}
\end{figure}

\end{document}